\numberwithin{equation}{section}
  \newtheorem{theorem}{Theorem}[section]
  \newtheorem{proposition}[theorem]{Proposition}
  \newtheorem{lemma}[theorem]{Lemma}
  \newtheorem{corollary}[theorem]{Corollary}
  \newtheorem{definition}[theorem]{Definition}
  \newtheorem{example}[theorem]{Example}
\title[Some remarks on quasi generalized CR-null geometry]{Some remarks on quasi generalized CR-null geometry in indefinite nearly cosymplectic manifolds}
\author[Fortun\'{e} Massamba, Samuel Ssekajja ]{Fortun\'{e} Massamba*, Samuel Ssekajja**}
\newcommand{\acr}{\newline\indent}
\address{\llap{*\,} School of Mathematics, Statistics and Computer Science\acr
 University of KwaZulu-Natal\acr
 Private Bag X01, Scottsville 3209\acr
South Africa}
\email{massfort@yahoo.fr, Massamba@ukzn.ac.za} 
\thanks{}
\address{\llap{**\,} School of Mathematics, Statistics and Computer Science\acr
 University of KwaZulu-Natal\acr
 Private Bag X01, Scottsville 3209\acr
South Africa}
\email{ssekajja.samuel.buwaga@aims-senegal.org} 
\thanks{}
\subjclass[2010]{Primary 53C25; Secondary 53C40, 53C50}
\keywords{Nearly cosymplectic manifold, QGCR-null submanifold, ascreen submanifold, mixed geodesic null submanifold.}
\begin{document}
 
\begin{abstract}
In \cite{ms}, the authors initiated the study of quasi generalized CR (QGCR)-null submanifolds. In this paper, attention is drawn to some distributions on ascreen QGCR-null submanifolds in an indefinite nearly cosymplectic manifold. We characterize totally umbilical and irrotational  ascreen QGCR-null submanifolds. We finally discuss the geometric effects of geodesity conditions on such submanifold.
\end{abstract} 

\maketitle

\section{Introduction}   
 One of the current interesting research areas in semi-Riemannian geometry is the theory of null (or lighlike) submanifolds. An intrinsic approach to the theory of null submanifols was advanced by D. N. Kupeli \cite{kup}, yet an extrinsic counterpart had to wait for Duggal-Bejancu \cite{db}, and later by Duggal-Sahin \cite{ds2}. Since then, many researchers have labored to extend their theories with evidence from these few selected papers: \cite{dj}, \cite{ds2}, \cite{ds3}, \cite{ds4}, \cite{dh}, \cite{ma1}, \cite{ma2}, \cite{ma22} and other references therein. The rapid increase in research on this topic, since 1996, is inspired by the numerous applications of the theory to  mathematical physics, particularly in general relativity. More precisely, in general relativity,  null submanifolds represent different models of black hole horizons (see \cite{db} and \cite{ds2} for details).
 
 In \cite{ds3}, the authors initiated the study of generalized CR (GCR)-null submanifolds of an indefinite Sasakian manifold, which are \textit{tangent} to the structure vector field, $\xi$, of the almost contact structure $(\overline{\phi}, \xi, \eta)$. Moreover, when $\xi$ is tangent to the submanifold, C. Calin \cite{ca} proved that it belongs to its screen distribution. This assumption is widely accepted and it has been applied in many papers on null contact geometry, for instance \cite{ds1}, \cite{ds2}, \cite{ds3}, \cite{ma1}, \cite{ma2} and \cite{ma22}. It is worthy mentioning that $\xi$ is a \textit{global vector field} defined on the entire tangent bundle of the ambient almost contact manifold. Thus, restricting it to  the screen distribution is only one of those cases in which it can be placed. In the study of Riemannian CR-submanifolds of Sasakian manifolds, Yano-Kon \cite[page 48]{yano} proved that making $\xi$ a normal vector field in such scenario leads to an anti-invariant submanifold, and hence $\xi$ was kept tangent to the CR-submanifold. Their proof leans against the fact that; the shape operator on such CR-submanifold is naturally \textit{symmetric}  with respect to the induced Riemannian metric $g$. On the other hand, the shape operators of any $r$-null submanifold are generally \textit{not} symmetric with respect to the induced degenerate metric $g$ (see \cite{db} and \cite{ds2} for details).
 
 In an attempt to generalize $\xi$,  we introduced a special class of CR-null submanifold of a nearly Sasakian manifold, known as \textit{quasi generalized CR (QGCR)}-null submanifold \cite{ms}, for which the classical GCR-null submanifolds \cite{ds2} forms part. Among other benefits, generalizing $\xi$ leads to QGCR-null submanifolds of lower dimensions and with quite different geometric properties compared to respective GCR-null submanifolds.
 
The purpose of this paper is to investigate the geometry of distributions on ascreen QGCR-null submanifolds of indefinite nearly cosymplectic manifolds. The paper is organized as follows; In Section \ref{Preli}, we present the basic notions of null submanifolds and nearly cosymplectic manifolds. More details can be found in \cite{mo}, \cite{bl1}, \cite{bl2}, \cite{be}, \cite{endo} and \cite{diaz}. In Section \ref{AlmostGe}, we review the basic notions of QGCR-null submanifolds and we give an example of ascreen QGCR-null submanifold. In Section \ref{existence}, we discuss totally umbilical, totally geodesic and irrotational ascreen QGCR-null submanifolds of an indefinte nearly cosymplectic space form $\overline{M}(\overline{c})$. Finally, in Section \ref{mixed} we investigate the geodesity of the distributions $D$ and $\widehat{D}$.

\section{Preliminaries}\label{Preli}

Let  $M^{m}$ be a codimension $n$  submanifold of a semi Riemannian manifold $(\overline{M},\overline{g})$ of constant index $\nu$, $1\le \nu\le m+n$, where both $m,n\ge 1$. Then, $M$ is said to be a \textit{null submanifold} of $\overline{M}$ if the tangent and normal bundles of $M$ have a non-trivial intersection. This intersection defines a smooth distribution  on $M$, called the \textit{radical} distribution \cite{db}. More precisely, consider $p\in M$, one  defines the orthogonal complement $T_{p} M^{\perp}$ of the tangent space $T_{p} M$ by
 $$
 T_{p} M^{\perp} = \{X\in T_{p} M: \overline{g}(X, Y)=0,\; \forall \, Y\in T_{p} M)\}.
 $$
 If we denote the radical distribution on $M$ by $\mathrm{Rad} \, T_{p} M$, then 
  $\mathrm{Rad} \, T_{p} M = \mathrm{Rad}\, T_{p} M^{\perp} = T_{p} M \cap T_{p} M^{\perp}$. The submanifold $M$ of $\overline{M}$ is said to be $r$-\textit{null submanifold} (one supposes that the index of $\overline{M}$ is $\nu \ge r$), if the mapping 
 $ 
 \mathrm{Rad} \, T M: p\in M \longrightarrow\mathrm{Rad}\, T_{p} M 
 $ 
 defines a \textit{smooth distribution} on $M$ of rank $r > 0$. 
 
 In this paper, an $r$-null submanifold will simply be called a \textit{null submanifold} and $g=\overline{g}|_{TM}$ is a \textit{null metric}, unless we need to specify $r$.
 
 Let $S(T M)$ be a screen distribution which is a semi-Riemannian complementary distribution of $\mathrm{Rad}\,T M$ in $T M$, that is,
 \begin{equation}\label{eq05}
  T M = \mathrm{Rad}\,T M \perp S(T M).
 \end{equation} 
Consider a screen transversal bundle $S(TM^\perp)$, which is semi-Riemannian and complementary to $\mathrm{Rad}\, TM$ in $TM^\perp$. For any local basis $\{E_{1},\cdots,E_{r}\}$ of  $\mathrm{Rad}\,TM$, there exists a local null frame $\{N_{1},\cdots,	N_{r}\}\subset S(T M^\perp)$ in $S(T M )^\perp$  such that $g(E_i , N_j ) = \delta_{ij}$ and $\overline{g}(N_{i},N_{j})=0$. It follows that there exists a null transversal vector bundle $l\mathrm{tr}(TM)$ locally spanned by $\{N_{1},\cdots,N_{r}\}$ (see details in \cite{db} and \cite{ds2}). If $\mathrm{tr}(TM)$ denotes the complementary (but not orthogonal) vector bundle to $TM$ in $T\overline{M}$. Then, 
\begin{align}\label{eq04}
           &\mathrm{tr}(TM)=l\mathrm{tr}(TM)\perp S(TM^\perp),\\\label{eq08}
  &T\overline{M}= S(TM)\perp S(TM^\perp)\perp\{\mathrm{Rad}\, TM\oplus l\mathrm{tr}(TM)\} .
\end{align}
It is important to note that the screen distribution $S(TM)$ is not unique, and is canonically isomorphic to the factor vector bundle $TM/ \mathrm{Rad}\, TM$ \cite{kup}.
 
 Given a null submanifold $M$, then the following classifications of $M$ are well-known \cite{db}: i). $M$ is $r$-null if $1\leq r< min\{m,n\}$; ii). $M$ is co-isotropic if $1\leq r=n<m$,  $S(TM^\perp)=\{0\}$; iii). $M$ is isotropic if $1\leq r=m<n$,  $S(TM)=\{0\}$; iv). $M$ is totally null if $r=n=m$,  $S(TM)=S(TM^\perp)=\{0\}$. 
 
 Where necessary, the following range of indices will be used;
\begin{equation*}
 i,j,k\in\{1,\ldots, r\}, \hspace{.2cm}\alpha,\beta,\gamma\in\{r+1,\ldots, n\}.
\end{equation*}
Consider a local quasi-orthonormal fields of frames of $\overline{M}$ along $M$ as 
\begin{equation*}
\{ E_1,\cdots, E_r,N_1,\cdots, N_r,X_{r+1},\cdots,X_{m},W_{1+r},\cdots, W_{n}\},
\end{equation*}
where
 $\{X_{r+1},\cdots,X_{m}\}$ and $\{W_{1+r},\ldots, W_n\}$ are respectively orthonormal bases of $\Gamma(S(TM)|_{U})$ and $\Gamma(S(TM^{\perp})|_{U})$.
 
Throughout the paper we consider $\Gamma(\Xi)$  to be  a set of smooth sections of the vector bundle $\Xi$.

Let $P$ be the projection morphism of $TM$ on to $S(TM)$. Then,  the Gauss-Weingartein equations of an $r$-null submanifold $M$ and $S(TM)$ are the following (see \cite{db} and \cite{ds2} for detailed explanations);
  \begin{align}
    & \overline{\nabla}_X Y=\nabla_X Y+\sum_{i=1}^r h_i^l(X,Y)N_i+\sum_{\alpha=r+1}^n h_\alpha^s(X,Y)W_\alpha,\label{eq11}  \\
     & \overline{\nabla}_X N_i=-A_{N_i} X+\sum_{j=1}^r \tau_{ij}(X) N_j+\sum_{\alpha=r+1}^n \rho_{i\alpha}(X)W_\alpha,\label{eq31}
     \end{align}
     \begin{align} 
     & \overline{\nabla}_X W_\alpha=-A_{W_\alpha} X+\sum_{i=1}^r \varphi_{\alpha i}(X) N_i+\sum_{\beta=r+1}^n \sigma_{\alpha\beta}(X)W_\beta,\label{eq32}\\ 
     & \nabla_X P Y=\nabla_X^* PY+\sum_{i=1}^r h_i^*(X, P Y)E_i,\\
     & \nabla_X E_i=-A_{E_i}^* X-\sum_{j=1}^r \tau_{ji}(X) E_j,\;\;\;\; \forall\; X,Y\in \Gamma(TM)\label{eq50},
  \end{align}
where  $\nabla$ and $\nabla^*$ are the induced connections on $TM$ and $S(TM)$ respectively, $h_i^l$ and $h_\alpha^s$ are symmetric bilinear forms known as \textit{local null} and \textit{screen fundamental} forms of $TM$ respectively. Furthermore, $h_i^*$ are the \textit{second fundamental forms} of $S(TM)$. $A_{N_i}$, $A_{E_i}^*$ and $A_{W_\alpha}$ are linear operators on $TM$ while $\tau_{ij}$, $\rho_{i\alpha}$, $\varphi_{\alpha i}$ and $\sigma_{\alpha\beta}$ are 1-forms on $TM$. Note that the \textit{second fundamental tensor} of $M$ is given by
 \begin{equation}\label{h1}
 h(X,Y)=\sum_{i=1}^{r} h_{i}^{l}(X,Y)N_{i}+\sum_{\alpha=r+1}^n h_{\alpha}^{s}(X,Y)W_{\alpha},
\end{equation}
for any $X,Y\in \Gamma(TM)$.  The connection  $\nabla^{*}$ is a metric connection on $S(TM)$ while $\nabla$ is generally not a metric connection and is given by
      \begin{equation*}
         (\nabla_X g)(Y,Z)=\sum_{i=1}^r\{h_i^l(X,Y)\lambda_i(Z)+h_i^l(X,Z)\lambda_i(Y)\},
      \end{equation*}
for any $X,Y\in \Gamma(TM)$ and $\lambda_i$ are 1-forms given by
          $\lambda_i(X)=\overline{g}(X,N_i)$, for all  $X\in \Gamma(TM).$ 
 By using (\ref{eq11}), (\ref{eq31}) and (\ref{eq32}), the curvature tensors $\bar{R}$, $R$ of $\bar{M}$ and $M$, respectively are related as,  for any $X,Y,Z,W\in \Gamma(TM)$,
      \begin{align}\label{s8}
                \overline{R}(X,W,Z,Y)&=\overline{g}(R(X,W)Z,Y)+\overline{g}(A_{h^l(X,Z)}W,Y)\nonumber\\
                                     &-\overline{g}(A_{h^l(W,Z)}X,Y)+\overline{g}(A_{h^s(X,Z)}W,Y)\nonumber\\
                                     &-\overline{g}(A_{h^s(W,Z)}X,Y)+\overline{g}((\nabla_X h^l)(W,Z),Y)\nonumber\\
                                     &-\overline{g}((\nabla_W h^l)(X,Z),Y)+\overline{g}(D^l(X,h^s(W,Z)),Y)\nonumber\\
                                     &-\overline{g}(D^l(W,h^s(X,Z)),Y)+\overline{g}((\nabla_X h^s)(W,Z),Y)\nonumber\\
                                     &-\overline{g}((\nabla_W h^s)(X,Z),Y)+\overline{g}(D^s(X,h^l(W,Z)),Y)\nonumber\\
                                     &-\overline{g}(D^s(W,h^l(X,Z)),Y).
      \end{align}
 A null submanifold $(M,g)$ of an indefinite manifold $(\overline{M},\overline{g})$ is said to be totally umbilical in $\overline{M}$ \cite{ds2} if there is a smooth transversal vector field $\mathcal{H}\in \Gamma(\mathrm{tr}(TM))$, called the transversal curvature vector of $M$ such that 
          \begin{equation}\label{eq17}
             h = g\otimes \mathcal{H}.
          \end{equation}
Moreover, it is easy to see that $M$ is totally umbilical in $\overline{M}$, if and only if on each coordinate neighborhood $U$ there exist  smooth vector fields $\mathscr{H}^l\in\Gamma(l\mathrm{tr}(TM))$ and $\mathscr{H}^s\in\Gamma(S(TM^\perp))$ and smooth functions $\mathscr{H}_i^l\in F(l\mathrm{tr}(TM))$ and $\mathscr{H}_\alpha^s\in F(S(TM^\perp))$ such that,  
  \begin{align}
   h^l(X,Y) & =\mathscr{H}^l g(X,Y),\;\;\; h^s(X,Y)=\mathscr{H}^s g(X,Y),\nonumber\\
   h_i^l(X,Y) & =\mathscr{H}_i^l g(X,Y),\;\;\;h_\alpha^s(X,Y)=\mathscr{H}_\alpha^s g(X,Y)\label{s191}, 
  \end{align} 
for all $X,Y\in\Gamma(TM)$.  

Let now consider $\overline{M}$ to be a $(2n + 1)$-dimensional manifold endowed with an almost contact structure $(\overline{\phi}, \xi, \eta)$, i.e. $\overline{\phi}$ is a tensor field of type $(1, 1)$, $\xi$ is a vector field, and $\eta$ is a 1-form satisfying
\begin{equation}\label{equa1}
\overline{\phi}^{2} = -\mathbb{I} + \eta
\otimes\xi,\;\;\eta(\xi)= 1 ,\;\;\eta\circ\overline{\phi} =
0 \;\;\mbox{and}\;\;\overline{\phi}(\xi) = 0.
\end{equation}
Then $(\overline{\phi}, \xi, \eta,\,\overline{g})$ is called an \textit{indefinite almost contact metric structure} on $\overline{M}$ if $(\overline{\phi}, \xi, \eta)$ is an almost contact structure on $\overline{M}$ and $\overline{g}$ is a semi-Riemannian metric on $\overline{M}$ such that \cite{bl2}, for any vector field $\overline{X}$, $\overline{Y}$ on  $\overline{M}$,
\begin{equation}\label{equa2}
 \overline{g}(\overline{\phi}\,\overline{X}, \overline{\phi}\,\overline{Y}) = \overline{g}(\overline{X}, \overline{Y}) -  \eta(\overline{X})\,\eta(\overline{Y}),\;\;\mbox{and}\;\;\eta(\overline{X}) =  \overline{g}(\xi,\overline{X}).
\end{equation}
An indefinite almost contact metric manifold $(\overline{M}, \overline{\phi}, \xi, \eta, \overline{g})$ is said to \textit{nearly cosymplectic} if  
\begin{equation}\label{eqz}
         (\overline{\nabla}_{\overline{X}} \overline{\phi})\overline{Y}+(\overline{\nabla}_{\overline{Y}} \overline{\phi})\overline{X}=0, \;\;\forall\, \overline{X}, \overline{Y}\in\Gamma(T\overline{M}),
    \end{equation}
where $\overline{\nabla}$ is the Levi-Civita connection for  $\overline{g}$. Taking  $\overline{Y}=\xi$ in (\ref{eqz}), we get 
\begin{equation}\label{v10}
    \overline{\nabla}_{\overline{X}} \xi =-\overline{H}\,\overline{X},\;\; \forall\,\overline{X}\in\Gamma{(T\overline{M})}.
 \end{equation}
It is easy to see that one can verify the following properties of $\overline{H}$;
 \begin{align}
  & \overline{H}\,\overline{\phi} + \overline{\phi}\,\overline{H}=0,\;\;\overline{H}\xi=0,\;\;\eta\circ \overline{H}=0,\;\;(\overline{\nabla}_{X}\overline{\phi})\xi=\overline{\phi}\,\overline{H}X, \nonumber\\
  \mbox{and}\;\;& \overline{g}(\overline{H}\,\overline{X}, \overline{Y})=-\overline{g}(\overline{X}, \overline{H}\,\overline{Y})\;\;\;\; (\mbox{i.e.}\;\; \overline{H} \;\;\mbox{is skew-symmetric}),
 \end{align}
 for all $\overline{X},\overline{Y}\in \Gamma(T\overline{M})$. Let $\Omega$ denote  the fundamental 2-form of $\overline{M}$ defined by
\begin{equation}
 \Omega(\overline{X}, \overline{Y}) = \overline{g}(\overline{X}, \overline{\phi}\,\overline{Y}),\;\;\overline{X},\;\overline{Y}\in\Gamma(T \overline{M})
\end{equation}
 then,the  1-form $\eta$ and tensor $\overline{H}$ are related as follows;
\begin{lemma}\label{LN}
Let $(\overline{M}, \overline{\phi}, \xi, \eta,\,\overline{g})$ be an indefinite nearly cosymplectic. Then,  
 \begin{equation}\label{RelaOmegaETa}
  d\eta(\overline{X}, \overline{Y}) = \overline{g}( \overline{X}, \overline{H}\,\overline{Y}),\;\;\forall\,\overline{X}, \overline{Y}\in\Gamma(T \overline{M}).
 \end{equation}
  Moreover, $\overline{M}$ is cosymplectic if and only if $\overline{H}$ vanishes identically on $\overline{M}$.
\end{lemma}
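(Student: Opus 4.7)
My plan for the first identity (\ref{RelaOmegaETa}) is to use the invariant formula
\[
 d\eta(\overline{X},\overline{Y}) = \tfrac{1}{2}\bigl\{(\overline{\nabla}_{\overline{X}}\eta)(\overline{Y}) - (\overline{\nabla}_{\overline{Y}}\eta)(\overline{X})\bigr\},
\]
available because $\overline{\nabla}$ is torsion-free. Two inputs from the preliminaries then do the work: first, $\eta(\cdot)=\overline{g}(\xi,\cdot)$ by (\ref{equa2}) together with $\overline{\nabla}\,\overline{g}=0$ yields $(\overline{\nabla}_{\overline{X}}\eta)(\overline{Y}) = \overline{g}(\overline{\nabla}_{\overline{X}}\xi,\overline{Y}) = -\overline{g}(\overline{H}\,\overline{X},\overline{Y})$ by (\ref{v10}); second, the skew-symmetry of $\overline{H}$ listed just after (\ref{v10}) allows me to rewrite both terms as $\overline{g}(\overline{X},\overline{H}\,\overline{Y})$, and they add.

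For the ``moreover'' part I would split into two implications. The forward direction is essentially immediate: the cosymplectic hypothesis $\overline{\nabla}\,\overline{\phi}=0$, applied to $\xi$, gives $0=(\overline{\nabla}_{\overline{X}}\overline{\phi})\xi = \overline{\phi}\,\overline{H}\,\overline{X}$; since $\ker\overline{\phi}=\mathrm{span}\{\xi\}$ by (\ref{equa1}), while $\eta\circ\overline{H}=0$ rules out a $\xi$-component in $\overline{H}\,\overline{X}$, this forces $\overline{H}\,\overline{X}=0$.

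The reverse direction is the nontrivial one and, in my view, the \textbf{main obstacle}. Assuming $\overline{H}\equiv 0$, so that $\overline{\nabla}\xi=0$, my plan is to study the $(0,3)$-tensor $F(\overline{X},\overline{Y},\overline{Z}):=\overline{g}((\overline{\nabla}_{\overline{X}}\overline{\phi})\overline{Y},\overline{Z})$. The nearly cosymplectic identity (\ref{eqz}) makes $F$ skew in its first two arguments, and the skew-adjointness of $\overline{\phi}$ (a consequence of (\ref{equa2})) makes it skew in the last two; together these force $F$ to be totally skew-symmetric in all three slots. Using $(\overline{\nabla}_{\overline{X}}\overline{\phi})\xi=\overline{\phi}\,\overline{H}\,\overline{X}=0$, total skew-symmetry then forces $F$ to vanish whenever any argument is $\xi$. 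The delicate step is upgrading this partial vanishing to $F\equiv 0$ on all of $T\overline{M}$; for this I would differentiate (\ref{eqz}) once more and exploit the second Bianchi identity, mirroring the classical analysis of parallel Reeb fields on nearly cosymplectic manifolds found in Blair's and Endo's works cited in the introduction.
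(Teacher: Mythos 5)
Your derivation of (\ref{RelaOmegaETa}) is correct: with the factor-$\tfrac{1}{2}$ convention for $d\eta$ used in \cite{bl2} (the convention the paper also needs for consistency with (\ref{ms455})), the identity follows from torsion-freeness of $\overline{\nabla}$, the relation $\eta=\overline{g}(\xi,\cdot)$, equation (\ref{v10}) and the skew-symmetry of $\overline{H}$. The forward implication of the ``moreover'' part is likewise fine. Note that the paper states this lemma without proof, so there is no argument of the authors to compare your route against; the assessment below is of your proposal on its own terms.

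The genuine gap is the reverse implication, and it cannot be closed by ``differentiating (\ref{eqz}) once more and exploiting the second Bianchi identity'': if ``cosymplectic'' means $\overline{\nabla}\,\overline{\phi}=0$ (which is how the paper uses the word in the proof of Theorem \ref{cos}), the implication $\overline{H}\equiv 0\Rightarrow\overline{\nabla}\,\overline{\phi}=0$ is simply false for nearly cosymplectic manifolds. Your own analysis shows exactly how far one gets: $F(\overline{X},\overline{Y},\overline{Z})=\overline{g}((\overline{\nabla}_{\overline{X}}\overline{\phi})\overline{Y},\overline{Z})$ is totally skew and vanishes whenever an argument is $\xi$, so it is a $3$-form on $\ker\eta$ --- but that $3$-form is precisely $\overline{\nabla}\Omega$ of the nearly K\"ahler structure transverse to $\xi$, and it need not vanish. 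Concretely, take a non-K\"ahler nearly K\"ahler manifold $(N,J)$ (e.g.\ $S^{6}$), equip $N\times\mathbb{R}$ with the product metric, $\xi=\partial_{t}$, $\eta=dt$, $\overline{\phi}=J$ on $TN$ and $\overline{\phi}\xi=0$: this is nearly cosymplectic with $\overline{\nabla}\xi=0$, hence $\overline{H}=0$, yet $\overline{\nabla}\,\overline{\phi}\neq 0$. This is consistent with the structure theorem of \cite{be}, where $\overline{H}=0$ forces only a local splitting into a nearly K\"ahler factor times a line. The one reading under which the ``moreover'' clause is true is the weak one in which ``cosymplectic'' is taken to mean merely $d\eta=0$; in that case the equivalence is an immediate corollary of (\ref{RelaOmegaETa}) and the nondegeneracy of $\overline{g}$, and no Bianchi-identity argument is needed. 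Either way, the completion you defer to is not viable as written, and you should say so explicitly rather than present it as a routine delicate step.
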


Notice that, for all  $\overline{X}$, $\overline{Y}$, $\overline{Z}\in\Gamma(T \overline{M})$,
\begin{equation}
 \overline{g}((\overline{\nabla}_{\overline{Z}}\overline{\phi})\overline{X}, \overline{Y}) = -  \overline{g}( \overline{X}, (\overline{\nabla}_{\overline{Z}}\overline{\phi})\overline{Y}),
\end{equation}
which  means that the tensor $\overline{\nabla}\,\overline{\phi}$ is skew-symmetric. The following lemma is fundamental to the sequel.
\begin{lemma}\label{lems2}
 Let $\overline{M}$ be a nearly cosymplectic manifold, then
 \begin{align}\label{s1}
  (\overline{\nabla}_{\overline{X}}\overline{\phi})\overline{\phi}\, \overline{Y}& =  -\overline{\phi}(\overline{\nabla}_{\overline{X}}\overline{\phi})\overline{Y} - \overline{g}(\overline{Y},\overline{H}\,\overline{X})\xi 
  - \eta(\overline{Y})\overline{H}\,\overline{X}, \\\label{s2}
  (\overline{\nabla}_{\overline{\phi}\,\overline{X}}\overline{\phi})\overline{\phi}\, \overline{Y}& =  -(\overline{\nabla}_{\overline{X}}\overline{\phi})\overline{Y} - \eta(\overline{X})\overline{\phi} \overline{H}\,\overline{Y} + \eta(\overline{Y})\overline{\phi}\overline{H}\,\overline{X},
 \end{align}
for all $\overline{X},\overline{Y}\in \Gamma(T\overline{M})$.
\end{lemma}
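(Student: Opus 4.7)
The plan is to derive (\ref{s1}) by differentiating the algebraic identity $\overline{\phi}^{2}=-\mathbb{I}+\eta\otimes\xi$, and then to derive (\ref{s2}) by substituting $\overline{X}\mapsto \overline{\phi}\,\overline{X}$ into (\ref{s1}) and invoking the nearly cosymplectic defining relation (\ref{eqz}).

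For (\ref{s1}), apply $\overline{\nabla}_{\overline{X}}$ to $\overline{\phi}^{2}\overline{Y}=-\overline{Y}+\eta(\overline{Y})\xi$. Expanding the left-hand side via the product rule $\overline{\nabla}_{\overline{X}}(\overline{\phi}(\overline{\phi}\,\overline{Y}))=(\overline{\nabla}_{\overline{X}}\overline{\phi})\overline{\phi}\,\overline{Y}+\overline{\phi}(\overline{\nabla}_{\overline{X}}\overline{\phi})\overline{Y}+\overline{\phi}^{2}\overline{\nabla}_{\overline{X}}\overline{Y}$ and using $\overline{\phi}^{2}=-\mathbb{I}+\eta\otimes\xi$ to eliminate the last term, while expanding the right-hand side via $\overline{\nabla}_{\overline{X}}\xi=-\overline{H}\,\overline{X}$ from (\ref{v10}), the $-\overline{\nabla}_{\overline{X}}\overline{Y}$ and $\eta(\overline{\nabla}_{\overline{X}}\overline{Y})\xi$ contributions cancel on both sides. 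What remains is
\[
(\overline{\nabla}_{\overline{X}}\overline{\phi})\overline{\phi}\,\overline{Y}+\overline{\phi}(\overline{\nabla}_{\overline{X}}\overline{\phi})\overline{Y}=(\overline{\nabla}_{\overline{X}}\eta)(\overline{Y})\,\xi-\eta(\overline{Y})\overline{H}\,\overline{X}.
\]
Finally, $(\overline{\nabla}_{\overline{X}}\eta)(\overline{Y})=\overline{g}(\overline{\nabla}_{\overline{X}}\xi,\overline{Y})=-\overline{g}(\overline{Y},\overline{H}\,\overline{X})$ by (\ref{equa2}) and (\ref{v10}), which yields (\ref{s1}).

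For (\ref{s2}), substitute $\overline{X}\mapsto\overline{\phi}\,\overline{X}$ into (\ref{s1}), then rewrite $(\overline{\nabla}_{\overline{\phi}\,\overline{X}}\overline{\phi})\overline{Y}=-(\overline{\nabla}_{\overline{Y}}\overline{\phi})\overline{\phi}\,\overline{X}$ via (\ref{eqz}), and apply (\ref{s1}) once more with $\overline{X},\overline{Y}$ interchanged to replace $(\overline{\nabla}_{\overline{Y}}\overline{\phi})\overline{\phi}\,\overline{X}$. After applying $\overline{\phi}$ to the result and using $\overline{\phi}^{2}=-\mathbb{I}+\eta\otimes\xi$ together with $\overline{\phi}\,\xi=0$, a term $\overline{\phi}^{2}(\overline{\nabla}_{\overline{Y}}\overline{\phi})\overline{X}=-(\overline{\nabla}_{\overline{Y}}\overline{\phi})\overline{X}+\eta((\overline{\nabla}_{\overline{Y}}\overline{\phi})\overline{X})\xi$ appears; here I use $(\overline{\nabla}_{\overline{Y}}\overline{\phi})\xi=\overline{\phi}\,\overline{H}\,\overline{Y}$ from the properties of $\overline{H}$ together with the skew-symmetry of $\overline{\nabla}\overline{\phi}$ to compute $\eta((\overline{\nabla}_{\overline{Y}}\overline{\phi})\overline{X})=-\overline{g}(\overline{X},\overline{\phi}\,\overline{H}\,\overline{Y})$. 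Converting $(\overline{\nabla}_{\overline{Y}}\overline{\phi})\overline{X}=-(\overline{\nabla}_{\overline{X}}\overline{\phi})\overline{Y}$ through (\ref{eqz}) brings the main term to the desired form.

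The delicate step is verifying that the two scalar-valued $\xi$-contributions cancel. Collecting them gives $\bigl[\overline{g}(\overline{X},\overline{\phi}\,\overline{H}\,\overline{Y})-\overline{g}(\overline{Y},\overline{H}\,\overline{\phi}\,\overline{X})\bigr]\xi$. To check this vanishes, one first derives the skew-adjointness $\overline{g}(\overline{\phi}\,\overline{X},\overline{Y})=-\overline{g}(\overline{X},\overline{\phi}\,\overline{Y})$ from (\ref{equa2}) (by substituting $\overline{X}\mapsto\overline{\phi}\,\overline{X}$ and using $\overline{\phi}^{2}$), and then combines it with the skew-symmetry of $\overline{H}$ and the anticommutation $\overline{H}\,\overline{\phi}+\overline{\phi}\,\overline{H}=0$ to get $\overline{g}(\overline{X},\overline{\phi}\,\overline{H}\,\overline{Y})=\overline{g}(\overline{Y},\overline{H}\,\overline{\phi}\,\overline{X})$. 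Using the same anticommutation to rewrite $-\eta(\overline{Y})\overline{H}\,\overline{\phi}\,\overline{X}=\eta(\overline{Y})\overline{\phi}\,\overline{H}\,\overline{X}$ produces (\ref{s2}). Book-keeping of the $\xi$-terms is the only real obstacle; once the skew-adjointness of $\overline{\phi}$ is in hand the cancellation is automatic.
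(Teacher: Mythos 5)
Your proof is correct; the paper itself only states that the lemma ``follows from a straightforward calculation,'' and your argument is a valid, complete instance of exactly that calculation --- differentiating $\overline{\phi}^{2}=-\mathbb{I}+\eta\otimes\xi$ for (\ref{s1}), then substituting $\overline{X}\mapsto\overline{\phi}\,\overline{X}$ and using (\ref{eqz}) for (\ref{s2}). I checked the one delicate point, the cancellation of the $\xi$-components via $\overline{g}(Y,\overline{H}\,\overline{\phi}\,\overline{X})=\overline{g}(\overline{X},\overline{\phi}\,\overline{H}\,\overline{Y})$, and it goes through exactly as you describe using the skew-adjointness of $\overline{\phi}$ and $\overline{H}$ and the identity $(\overline{\nabla}_{\overline{Y}}\overline{\phi})\xi=\overline{\phi}\,\overline{H}\,\overline{Y}$.
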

\begin{proof}
The proof follows from a straightforward calculation.
\end{proof}

\section{Quasi generalized CR-null submanifolds}\label{AlmostGe}

We recall some basic notions on QGCR-null submanifolds (see \cite{ms} for details).

The structure vector field $\xi$ of an  indefinite almost contact manifold $(\overline{M}, \overline{g})$ can be written according to decomposition (\ref{eq08}) as follows;
         \begin{equation}\label{eq2}
              \xi=\xi_S+\sum_{i=1}^ra_i E_i+\sum_{i=1}^rb_i N_i+\sum_{\alpha=r+1}^nc_\alpha W_\alpha,
         \end{equation}
where $\xi_S$ is a smooth vector field of $S(TM)$ while $a_i=\eta(N_i)$, $b_i=\eta(E_i)$ and $c_\alpha=\epsilon_\alpha\eta(W_\alpha)$ all smooth functions on $\overline{M}$. Here $\epsilon_\alpha=\overline{g}(W_\alpha,W_\alpha)$.

We adopt the definition of quasi generalized CR (QGCR)-lightlike submanifolds given in \cite{ms} for indefinite nearly cosymplectic manifolds.
\begin{definition} \label{def2}{\rm Let $(M,g,S(TM),S(TM^\perp))$  be a null submanifold of an indefinite nearly cosymplectic manifold $(\overline{M}, \overline{g})$. We say that $M$ is quasi generalized CR (QGCR)-null submanifold of $\overline{M}$ if the following conditions are satisfied:
\begin{enumerate}
 \item [(i)] there exist two distributions $D_1$ and $D_2$ of $\textrm{Rad}\,TM$  such that 
        \begin{equation}\label{eq03}
             \mathrm{Rad}\, TM = D_1\oplus D_2, \;\;\overline{\phi} D_1=D_1, \;\;\overline{\phi} D_2\subset S(TM),
        \end{equation}
 \item [(ii)] there exist vector bundles $D_0$ and $\overline{D}$ over $S(TM)$ such that 
         \begin{align} 
              & S(TM)=\{\overline{\phi} D_2 \oplus \overline{D}\}\perp D_0,\\ 
              \mbox{with}\;\;\; &\overline{\phi}D_{0} \subseteq D_{0},\;\;   \overline{D}= \overline{\phi} \, \mathcal{S}\oplus \overline{\phi} \,\mathcal{L}, \label{s81}
         \end{align}    
\end{enumerate}
where $D_0$ is a non-degenerate distribution on $M$, $\mathcal{L}$ and $\mathcal{S}$ are respectively vector subbundles of $l\mathrm{tr}(TM)$  and $S(TM^{\perp})$.
}
\end{definition}

If $D_{1}\neq \{0\}$, $D_0\neq \{0\}$, $D_2\neq \{0\}$ and $\mathcal{S}\neq \{0\}$, then $M$ is called a \textit{proper QGCR-null submanifold}.

A proof of the following Proposition uses similar arguments as in \cite{ms};
\begin{proposition}
 A QGCR-null submanifold $M$ of an indefinite nearly cosymplectic manifold $\overline{M}$ tangent to the structure vector field $\xi$ is a GCR-null submanifold.
 \end{proposition}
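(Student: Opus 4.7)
The strategy is to unfold both definitions and show that once the structure vector field $\xi$ is tangent to $M$, the QGCR-data of Definition~\ref{def2} is already exactly the classical GCR-data of Duggal-Sahin (which amounts to Definition~\ref{def2} together with the assumption $\xi\in\Gamma(TM)$). No deep computation is needed, only a careful component analysis via (\ref{eq2}) and the nearly cosymplectic identities.

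First, I would project the identity (\ref{eq2}) along the orthogonal decomposition (\ref{eq08}). The hypothesis $\xi\in\Gamma(TM)$ forces the $l\mathrm{tr}(TM)$- and $S(TM^\perp)$-components of $\xi$ to vanish, so that
\begin{equation*}
b_i=\eta(E_i)=0\quad\text{for all } i,\qquad c_\alpha=\epsilon_\alpha\eta(W_\alpha)=0\quad\text{for all }\alpha,
\end{equation*}
and hence $\xi=\xi_S+\sum_{i=1}^r a_iE_i$, with $\xi_S\in\Gamma(S(TM))$.

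Second, I would verify, by reading Definition~\ref{def2} axiom by axiom, that the residual form of $\xi$ fits the GCR-framework. The tools are the algebraic contact-metric identities (\ref{equa1})-(\ref{equa2}), in particular $\overline{\phi}\xi=0$, $\overline{\phi}^2=-\mathbb I+\eta\otimes\xi$ and the $\overline{\phi}$-skew-symmetry $\overline{g}(\overline{\phi}\,\overline X,\overline Y)=-\overline{g}(\overline X,\overline{\phi}\,\overline Y)$, together with the nearly cosymplectic identity (\ref{eqz}). These propagate the $\overline{\phi}$-invariance conditions imposed on $D_0,D_1,D_2,\mathcal L,\mathcal S$ in Definition~\ref{def2} to their GCR-analogues without alteration, since they are purely algebraic and involve only the contact metric data. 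Because the identical computation was already carried out in the nearly Sasakian setting in \cite{ms}, the task here reduces to checking that the same manipulations remain valid in the nearly cosymplectic case, which is immediate.

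Finally, the GCR-null axioms are read off directly from the resulting reduced data, completing the proof. The main subtle point I would watch for is the residual radical term $\sum_i a_iE_i$: although the coefficients $a_i=\eta(N_i)$ need not vanish, one checks (by pairing against $l\mathrm{tr}(TM)$ and invoking the total nullity of $\mathrm{Rad}(TM)$) that these terms do not disturb either the splitting $\mathrm{Rad}(TM)=D_1\oplus D_2$ or the inclusion $\overline{\phi}D_2\subset S(TM)$, so the structure passes intact to a GCR-null submanifold.
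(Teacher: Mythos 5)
The paper offers no proof of this proposition beyond the remark that it ``uses similar arguments as in \cite{ms}'', so your attempt has to be measured against the intended reduction, and it misses what that reduction actually consists of. The $\overline{\phi}$-invariance conditions in Definition \ref{def2} are verbatim the same as in the classical GCR definition of Duggal--Sahin, so there is nothing to ``propagate''; the only points where QGCR is genuinely weaker are metric: the GCR screen splits as $S(TM)=\{\overline{\phi}D_2\oplus \overline{D}\}\perp D_0\perp\langle\xi\rangle$ with $\overline{D}=\overline{\phi}\mathcal{L}\perp\overline{\phi}\mathcal{S}$ an \emph{orthogonal} sum, while (\ref{s81}) only demands non-orthogonal direct sums and reserves no slot for $\xi$. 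The proof therefore consists of the computations your outline never performs: once $\xi\in\Gamma(TM)$ one has $\eta(E_i)=\overline{g}(\xi,E_i)=0$ and $\eta(W_\alpha)=\overline{g}(\xi,W_\alpha)=0$, so (\ref{equa2}) gives $\overline{g}(\overline{\phi}N,\overline{\phi}W)=\overline{g}(N,W)-\eta(N)\eta(W)=0$, i.e. $\overline{\phi}\mathcal{L}\perp\overline{\phi}\mathcal{S}$; and $\overline{g}(\xi,\overline{\phi}\,\overline{X})=-\overline{g}(\overline{\phi}\xi,\overline{X})=0$ places $\xi$ in $D_0$, which can then be split as $D_0'\perp\langle\xi\rangle$ with $D_0'$ still invariant and non-degenerate. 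None of this uses the nearly cosymplectic identity (\ref{eqz}); the statement is purely almost-contact-metric algebra.

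The more serious flaw is your treatment of the radical component $\sum_ia_iE_i$. If some $a_i=\eta(N_i)\neq 0$ then $\xi\notin\Gamma(S(TM))$ and $M$ cannot satisfy the GCR definition at all, since that definition requires $\langle\xi\rangle\subset S(TM)$; these terms do ``disturb the structure'', and pairing $\xi$ against $N_j$ merely returns $a_j$ tautologically rather than killing it. You must either invoke Calin's result \cite{ca} quoted in the introduction (a tangent $\xi$ lies in the screen), or derive $a_i=0$ from the QGCR axioms: writing $\xi=\xi_S+\xi_1+\xi_2$ with $\xi_j\in\Gamma(D_j)$ and $\xi_S=\overline{\phi}E'+\overline{\phi}N'+\overline{\phi}W'+X_0$ per Definition \ref{def2}, applying $\overline{\phi}$ to $\overline{\phi}\xi=0$, using $\overline{\phi}^2=-\mathbb{I}+\eta\otimes\xi$ together with $\eta|_{\mathrm{Rad}\,TM}=\eta|_{S(TM^\perp)}=0$, and comparing components in the decomposition (\ref{eq08}) forces $N'=W'=E'=0$, then $\overline{\phi}\xi_1=0$ and $\overline{\phi}\xi_2\in\Gamma(\overline{\phi}D_2)\cap\Gamma(D_0)=\{0\}$, whence $\xi_1=\xi_2=0$ and $\xi=X_0\in\Gamma(D_0)$. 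Without some such argument the proof does not go through.
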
 
 
Using  (\ref{eq05}), the tangent bundle of any QGCR-null submanifold, $TM$, can be decomposed as 
\begin{equation}
 TM  =D \oplus \widehat{D}, \;\; \mbox{with}\;\;D =D_0\perp D_{1}\;\mbox{and}\;\widehat{D}=\{D_{2}\perp\overline{\phi} D_{2}\} \oplus\overline{D}.\nonumber
\end{equation} 
Unlike for a GCR-null submanifold, in a QGCR-null submanifold,  $D$ is invariant with respect to $\overline{\phi}$ while $\widehat{D}$ is not generally anti-invariant.

Throughout this paper, we suppose that $(M,g,S(TM),S(TM^\perp))$ is a proper QGCR-null submanifold. From the above definition, we can easily deduce the following;
          \begin{enumerate}
               \item condition (i) implies that $\dim(\mathrm{Rad}\, TM)=s\ge  3$,
               \item condition (ii) implies that $\dim(D)\ge  4l\ge 4$ and $\dim(D_2)= \dim(\mathcal{L})$. 
          \end{enumerate}  
 
\begin{definition}[\cite{dh1}]\label{def3}  {\rm
A null submanifold $M$ of a semi-Riemannian manifold $\overline{M}$ is said to be ascreen  if the structure vector field, $\xi$, belongs to $\mathrm{Rad}\, TM \oplus l\mathrm{tr}(T M)$.
}
\end{definition}
From Definition \ref{def3}, Lemma 3.6 and Theorem 3.7 of \cite{ms}, we have  
\begin{theorem}\label{asc}
Let $(M,g,S(TM),S(TM^\perp))$ be an ascreen QGCR-null submanifold of an indefinite nearly cosymplectic manifold $\overline{M}$, then $\xi\in\Gamma(D_{2}\oplus\mathcal{L})$. If $M$
 is a 3-null QGCR submanifold of an indefinite nearly cosymplectic manifold $(\overline{M}, \overline{g})$, then $M$ is ascreen null submanifold if and only if $\overline{\phi}\mathcal{L}=\overline{\phi} D_{2}$.
\end{theorem}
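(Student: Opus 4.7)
The statement splits into two assertions. For the inclusion $\xi \in \Gamma(D_{2}\oplus\mathcal{L})$, I would pick a local null frame adapted to the splittings $\mathrm{Rad}\,TM = D_{1}\oplus D_{2}$ and $l\mathrm{tr}(TM) = \mathcal{N}_{1}\oplus\mathcal{L}$, where $\mathcal{N}_{1}$ is the null transversal subbundle dual to $D_{1}$, and expand $\xi = \sum a_{i} E_{i} + \sum b_{j} N_{j}$ using the ascreen hypothesis. Applying $\overline{\phi}$ and invoking $\overline{\phi}\xi = 0$ from (\ref{equa1}), the resulting relation decouples into a component in $D_{1}\oplus\mathcal{N}_{1}$ (preserved by $\overline{\phi}$) and a component in $S(TM)$ (which contains $\overline{\phi}D_{2}$ and $\overline{\phi}\mathcal{L}$). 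Transversality of these bundles forces both components to vanish independently, and injectivity of $\overline{\phi}$ on $D_{1}\oplus\mathcal{N}_{1}$ eliminates the $D_{1}$ and $\mathcal{N}_{1}$ coefficients of $\xi$.

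For the $3$-null case, I would first pin down the dimensions. The identity $\overline{\phi}^{2} = -\mathbb{I} + \eta\otimes\xi$ applied to $X\in D_{1}$ shows $\eta(X)\xi \in D_{1}$, and since $\overline{g}(\xi,\xi) = \eta(\xi) = 1$ while $D_{1}$ is null, one must have $\eta|_{D_{1}} = 0$. Consequently $\overline{\phi}$ is an almost complex structure on $D_{1}$, so $\dim D_{1}$ is even; properness gives $\dim D_{1}\ge 2$, and together with $\dim \mathrm{Rad}\,TM = 3$ this forces $\dim D_{1} = 2$, $\dim D_{2} = 1 = \dim\mathcal{L}$ (using the dimensional remark after Definition \ref{def2}). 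Choose local generators $E\in\Gamma(D_{2})$ and $N\in\Gamma(\mathcal{L})$ with $\overline{g}(E,N) = 1$.

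For the forward implication, the first part gives $\xi = aE + bN$; computing $\overline{g}(\xi,\xi) = 2ab = 1$ forces $a,b\neq 0$, and then $\overline{\phi}\xi = 0$ yields $\overline{\phi}N = -(a/b)\overline{\phi}E$, so $\overline{\phi}\mathcal{L}$ and $\overline{\phi}D_{2}$ span the same line of $S(TM)$ at every point. Conversely, if $\overline{\phi}\mathcal{L} = \overline{\phi}D_{2}$, then $\overline{\phi}(N - \mu E) = 0$ for a smooth function $\mu$; applying $\overline{\phi}^{2} = -\mathbb{I} + \eta\otimes\xi$ once more gives $N - \mu E = \eta(N - \mu E)\,\xi$. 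Since $E$ and $N$ are pointwise linearly independent (they pair dually and lie in disjoint bundles), $N - \mu E\neq 0$, forcing $\eta(N - \mu E)\neq 0$ and exhibiting $\xi$ as a nonzero multiple of $N - \mu E \in D_{2}\oplus\mathcal{L} \subset \mathrm{Rad}\,TM \oplus l\mathrm{tr}(TM)$.

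The main delicacy is the nonvanishing bookkeeping rather than any deep idea: one must confirm $a,b\neq 0$ in the forward direction (so the parallelism of $\overline{\phi}D_{2}$ and $\overline{\phi}\mathcal{L}$ is genuine rather than vacuous), and $\eta(N-\mu E)\neq 0$ in the reverse direction (so the extraction of $\xi$ from $N - \mu E$ actually goes through). Both reduce to the normalizations $\overline{g}(E,N) = 1$ and $\eta(\xi) = 1$, and require no further machinery beyond the almost-contact identities already collected in (\ref{equa1})--(\ref{equa2}).
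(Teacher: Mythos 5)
The paper itself does not write out a proof of this theorem --- it defers to Lemma 3.6 and Theorem 3.7 of \cite{ms} --- so your proposal has to be judged against that argument. Your overall strategy (expand $\xi$ in the ascreen decomposition, apply $\overline{\phi}$ and separate components; then exploit the one-dimensionality of $D_{2}$ and $\mathcal{L}$ in the $3$-null case) is the right one, and the entire second half is correct: the dimension count via $\eta|_{D_{1}}=0$, the computation $\overline{g}(\xi,\xi)=2ab=1$ forcing $a,b\neq 0$, and the use of $\overline{\phi}^{2}=-\mathbb{I}+\eta\otimes\xi$ on $N-\mu E$ in the converse are exactly the points on which the equivalence with $\overline{\phi}\mathcal{L}=\overline{\phi}D_{2}$ turns.

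There is, however, a genuine gap in the first part. You decouple $\overline{\phi}\xi=0$ by asserting parenthetically that $D_{1}\oplus\mathcal{N}_{1}$ is preserved by $\overline{\phi}$. Only $\overline{\phi}D_{1}=D_{1}$ is part of Definition \ref{def2}; the invariance of $\mathcal{N}_{1}$ is not, and verifying it directly is circular. Indeed, for $N\in\Gamma(\mathcal{N}_{1})$ and $E\in\Gamma(D_{2})$ one has $\overline{g}(\overline{\phi}N,\overline{\phi}E)=\overline{g}(N,E)-\eta(N)\eta(E)=-\eta(N)\eta(E)$, so $\overline{\phi}N$ acquires a component in $S(TM)$ precisely when $\eta(N)\neq 0$, i.e.\ precisely when $\xi$ has a nonzero $D_{1}$-coefficient in the expansion $\xi=\sum_{i}\eta(N_{i})E_{i}+\sum_{i}\eta(E_{i})N_{i}$ coming from (\ref{eq2}) --- which is exactly what you are trying to rule out. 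The repair is short and uses an ingredient you already derive in your second paragraph: since $\eta\circ\overline{\phi}=0$ and $D_{1}=\overline{\phi}D_{1}$, the functions $\eta(E_{i})$ for $E_{i}\in\Gamma(D_{1})$ vanish, so the $\mathcal{N}_{1}$-part of $\xi$ is zero from the outset; then $\overline{\phi}\xi=0$ splits into a piece in $\mathrm{Rad}\,TM$ (the image of the $D_{1}$-component) and a piece in $S(TM)$ (the images of the $D_{2}$- and $\mathcal{L}$-components), and $\mathrm{Rad}\,TM\cap S(TM)=\{0\}$ together with the injectivity of $\overline{\phi}$ on $D_{1}$ kills the $D_{1}$-coefficients as well. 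A smaller point: the normalization $\overline{g}(E,N)=1$ presupposes that $\mathcal{L}$ pairs nontrivially with $D_{2}$; in the forward direction this is forced by $\overline{g}(\xi,\xi)=1$ (otherwise $\xi=aE$ would be null), but it deserves a sentence.
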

 \begin{proof} 
 The proof follows from straightforward calculation as in \cite{ms}.
 \end{proof}  
 It is crucial to note the following aspects with ascreen QGCR-null submanifold:   item (2) of Definition \ref{def2} implies that $\dim(D)\ge 4l\ge 4$ and $\dim(D_2)=\dim(\mathcal{L})$. Thus $\dim(M)\ge 7$ and $\dim(\overline{M})\ge 11$, and any 7-dimensional ascreen QGCR-null submanifold is 3-null.
 
 In what follows,  we construct an ascreen QGCR-null submanifold of a special  nearly cosymplectic manifold with $\overline{H}=0$ (i.e., $\overline{M}$ is a cosymplectic manifold). Thus, let $(\mathbb{R}_{q}^{2m+1}, \overline{\phi}_{0} ,\xi, \eta, \overline{g})$ denote the manifold $\mathbb{R}_{q}^{2m+1}$ with its usual cosymplectic structure given by 
\begin{align*}
   &\eta = dz,\quad\xi=\partial z,\\
   &\overline{g} =\eta\otimes\eta-\sum_{i=1}^{\frac{q}{2}}(dx^i\otimes dx^i+dy^i\otimes dy^i)+\sum_{i=q+1}^{m}(dx^i\otimes dx^i+dy^i\otimes dy^i),\\
   &\phi_0 (\sum_{i=1}^m(X_i\partial x^i+Y_i\partial y^i)+Z\partial z )=\sum_{i=1}^m(Y_i\partial x^i-X_i\partial y^i),
\end{align*}
where $(x^{i} , y^{i} , z)$ are Cartesian coordinates and $\partial t_{k} = \frac{\partial}{\partial t^{k}}$, for $t\in\mathbb{R}^{2m+1}$.

Now, we use the above structure to construct the following example;
\begin{example}\label{exa1}
{\rm
Let $\overline{M}=(\mathbb{R}_4^{11}, \overline{g})$ be a semi-Euclidean space, with $\overline{g}$ is of signature $(-,-,+,+,+, - ,  -,+,+,+,+)$ with respect to the canonical basis
\begin{equation*}
 (\partial x_{1},\partial x_{2},\partial x_3,\partial x_4,\partial x_5,\partial y_1,\partial y_2,\partial y_3,\partial y_4,\partial y_{5},\partial z).
\end{equation*}
Let $(M,g)$ be a submanifold of $\overline{M}$ given by 
\begin{equation*}
 x^1=y^4,\;\; y^1=-x^4,\;\; z= x^2\sin\theta + y^2\cos\theta \;\;\mbox{and}\;\; y^5=(x^5)^{\frac{1}{2}},
 \end{equation*}
 where $\theta\in(0,\frac{\pi}{2})$. By direct calculations, we can see that the vector fields
\begin{align*} 
 E_1 & =\partial x_4+\partial y_1,\;\;\; E_2=\partial x_1-\partial y_4,\\
 E_3 & =\sin\theta \partial x_2 +\cos\theta \partial y_2+\partial z, \; \;\; X_1=2y^5\partial x_5+\partial y_5,\\ 
 X_2 & =-\cos\theta \partial x_2 +\sin\theta \partial y_2,\;\;\;  X_3=\partial y_3,\; X_4=\partial x_3, 
\end{align*}
form a local frame of $TM$. Then $\mathrm{Rad} \, TM$ is spanned by $\{E_1, E_2, E_3\}$, and therefore, $M$ is 3-null. Further, $\overline{\phi}_0 E_1=E_2$, therefore we set $D_1=\mbox{Span}\{E_1,E_2\}$. Also $\overline{\phi}_0 E_3=-X_2$ and thus $D_2=\mathrm{Span}\{E_3\}$. It is easy to see that $\overline{\phi}_0 X_3=X_4$, so we set $D_0=\mathrm{Span}\{X_3,X_4\}$. On the other hand, following direct calculations, we have 
\begin{align} 
 N_1 & =\frac{1}{2}(\partial x_4-\partial y_1),\;\; \;  N_2=\frac{1}{2}(-\partial x_1-\partial y_4),\nonumber\\
 N_3 & =\frac{1}{2}(-\sin\theta \partial x_2 -\cos\theta \partial y_2+\partial z),\;\; \;  W=\partial x_5-2y^5\partial y_5, \nonumber
\end{align}
from which $l\mathrm{tr}(TM)=\mathrm{Span}\{N_1,N_2,N_3\}$ and 
$S(TM^\perp)=\mathrm{Span}\{W\}$. Clearly, $\overline{\phi}_0 N_2=-N_1$. 
Further, $\overline{\phi}_0 N_3=\frac{1}{2} X_2$ and thus 
$\mathcal{L}=\mbox{Span}\{N_3\}$. Notice that 
$\overline{\phi}_0N_3=-\frac{1}{2}\overline{\phi}_0 E_3$ and therefore 
$\overline{\phi}_0\mathcal{L}=\overline{\phi}_0 D_2$. Also, $\overline{\phi}_0 
W=-X_1$ and therefore $\mathcal{S}=\mbox{Span}\{W\}$. Finally, we calculate 
$\xi$ as follows; Using Theorem \ref{asc} we have $\xi=a E_3+b N_3$. Applying 
$\overline{\phi}_0$ to this equation we obtain $a\overline{\phi}_0 
E_3+b\overline{\phi}_0 N_3=0$. Now, substituting for $\overline{\phi}_0 E_3$ and 
 $\overline{\phi}_0 N_3$ in this equation we get $2a=b$, from which we get 
$\xi=\frac{1}{2}(E_3+2N_3)$. Since $\overline{\phi}_0\xi=0$ and 
$\overline{g}(\xi,\xi)=1$, we conclude that $(M,g)$ is an ascreen QGCR-null 
submanifold of $\overline{M}$.}
\end{example}
\begin{proposition}
There exist no co-isotropic, isotropic or totally null proper QGCR-null submanifolds of an indefinite nearly cosymplectic manifold.
\end{proposition}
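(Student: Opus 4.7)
The plan is to argue directly from the definition of a proper QGCR-null submanifold that both the screen distribution $S(TM)$ and the screen transversal bundle $S(TM^\perp)$ must be non-trivial, and then simply compare with the three cases (co-isotropic, isotropic, totally null) listed among the classifications of null submanifolds in Section \ref{Preli}. The argument will not require any of the nearly cosymplectic machinery; all the work is hidden in the definition of ``proper.''

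First I would unpack the properness hypothesis. By definition, a proper QGCR-null submanifold satisfies $D_0\neq\{0\}$ and $\mathcal{S}\neq\{0\}$. Since Definition \ref{def2} places $D_0$ as a non-degenerate subbundle of $S(TM)$ and $\mathcal{S}$ as a subbundle of $S(TM^\perp)$, the inclusions $\{0\}\neq D_0\subseteq S(TM)$ and $\{0\}\neq\mathcal{S}\subseteq S(TM^\perp)$ immediately yield
\begin{equation*}
S(TM)\neq\{0\}\quad\text{and}\quad S(TM^\perp)\neq\{0\}.
\end{equation*}

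Next I would compare these two conclusions with the characterizations recalled in Section \ref{Preli}: co-isotropic requires $S(TM^\perp)=\{0\}$; isotropic requires $S(TM)=\{0\}$; totally null requires both $S(TM)=\{0\}$ and $S(TM^\perp)=\{0\}$. Each of these three possibilities therefore contradicts one of the two non-vanishing conclusions above, ruling out all three cases simultaneously.

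There is really no main obstacle here; the statement is essentially a bookkeeping consequence of the definition. The only point worth emphasising in the writeup is the observation that the \emph{non-degeneracy} of $D_0$ prevents the trivial escape route of absorbing $D_0$ into the radical, so that $D_0\neq\{0\}$ genuinely forces $S(TM)\neq\{0\}$; analogously $\mathcal{S}\subseteq S(TM^\perp)$ is by construction a component of the screen transversal bundle, so $\mathcal{S}\neq\{0\}$ genuinely forces $S(TM^\perp)\neq\{0\}$.
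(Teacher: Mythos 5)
Your argument is correct: properness gives $\{0\}\neq D_0\subseteq S(TM)$ and $\{0\}\neq\mathcal{S}\subseteq S(TM^\perp)$, which contradicts each of the three classifications, and this is precisely the bookkeeping the paper has in mind (it states the proposition without proof, treating it as an immediate consequence of Definition \ref{def2}). The only inessential point is your appeal to the non-degeneracy of $D_0$; the containment $D_0\subseteq S(TM)$ alone already forces $S(TM)\neq\{0\}$.
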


\section{Umbilical and Geodesic ascreen QGCR-null submanifolds}\label{existence}

In this section, we prove two main theorems concerning totally umbilical, totally geodesic and irrotational ascreen QGCR-null submanifolds of  $\overline{M}$. An indefinite nearly cosymplectic manifold $\overline{M}$ is called an \textit{indefinite nearly cosymplectic space form}, denoted by $\overline{M}(\overline{c})$, if it has the constant $\overline{\phi}$-sectional curvature $\overline{c}$. The curvature tensor $\overline{R}$ of the indefinite nearly cosymplectic space form $\overline{M}(\overline{c})$ is given by \cite{endo}:
 \begin{align}\label{s9}
 4\overline{R}(\overline{X},\overline{W},\overline{Z},\overline{Y})& = \overline{g}((\overline{\nabla}_{\overline{W}}\overline{\phi})\overline{Z},(\overline{\nabla}_{\overline{X}}\overline{\phi})\overline{Y})-\overline{g}((\overline{\nabla}_{\overline{W}}\overline{\phi})\overline{Y},(\overline{\nabla}_{\overline{X}}\overline{\phi})\overline{Z})\nonumber\\
                        & -2\overline{g}((\overline{\nabla}_{\overline{W}}\overline{\phi})\overline{X},(\overline{\nabla}_{\overline{Y}}\overline{\phi})\overline{Z})+\overline{g}(\overline{H}\,\overline{W},\overline{Z})\overline{g}(\overline{H}\,\overline{X},\overline{Y})\nonumber\\
                        & -\overline{g}(\overline{H}\,\overline{W},\overline{Y})\overline{g}(\overline{H}\,\overline{X},\overline{Z})-2\overline{g}(\overline{H}\,\overline{W},\overline{X})\overline{g}(\overline{H}\,\overline{Y},\overline{Z})\nonumber\\
                        & -\eta(\overline{W})\eta(\overline{Y})\overline{g}(\overline{H}\,\overline{X},\overline{H}\,\overline{Z})+\eta(\overline{W})\eta(\overline{Z})\overline{g}(\overline{H}\,\overline{X},\overline{H}\,\overline{Y})\nonumber\\
                        & +\eta(\overline{X})\eta(\overline{Y})\overline{g}(\overline{H}\,\overline{W},\overline{H}\,\overline{Z})-\eta(\overline{X})\eta(\overline{Z})\overline{g}(\overline{H}\,\overline{W},\overline{H}\,\overline{Y})\nonumber\\
                        &+\overline{c}\{\overline{g}(\overline{X},\overline{Y})\overline{g}(\overline{Z},\overline{W})-\overline{g}(\overline{Z},\overline{X})\overline{g}(\overline{Y},\overline{W})\nonumber\\
                        &+\eta(\overline{Z})\eta(\overline{X})\overline{g}(\overline{Y},\overline{W})-\eta(\overline{Y})\eta(\overline{X})\overline{g}(\overline{Z},\overline{W})\nonumber\\
                        &+\eta(\overline{Y})\eta(\overline{W})\overline{g}(\overline{Z},\overline{X})-\eta(\overline{Z})\eta(\overline{W})\overline{g}(\overline{Y},\overline{X})\nonumber\\
                        &+\overline{g}(\overline{\phi}\,\overline{Y},\overline{X})\overline{g}(\overline{\phi}\,\overline{Z},\overline{W})-\overline{g}(\overline{\phi}\,\overline{Z},\overline{X})\overline{g}(\overline{\phi}\, \overline{Y},\overline{W})\nonumber\\
                        &-2\overline{g}(\overline{\phi}\,\overline{Z},\overline{Y})\overline{g}(\overline{\phi}\,\overline{X},\overline{W})\},
 \end{align}
for all $\overline{X},\overline{Y},\overline{Z},\overline{W}\in\Gamma(T\overline{M})$.

  Notice that $D_0$ and $\overline{\phi}\mathcal{S}$ are orthogonal and non-degenerate subbundles of $TM$ and that when $M$ is ascreen QGCR-null submanifold, we observe that 
\begin{equation}\label{M11}
 \eta(X)=\eta(Z)=0,\quad \forall\,X\in\Gamma(D_0),\quad Z\in\Gamma(\overline{\phi}\mathcal{S}).
\end{equation}

\begin{theorem}\label{cos}
 Let $(M,g,S(TM)$, $S(TM^\perp))$ be a totally umbilical or totally geodesic ascreen QGCR-null submanifold of an indefinite nearly cosymplectic space form  $\overline{M}(\overline{c})$, of pointwise constant $\overline{\phi}$-sectional curvature $\overline{c}$, such that $D_{0}$ and $\overline{\phi}\mathcal{S}$ are space-like and parallel distributions with respect to $\nabla$. Then,  $\overline{c}\ge 0$.  Equality occurs when $\overline{M}(\overline{c})$ is an indefinite cosymplectic space form.
\end{theorem}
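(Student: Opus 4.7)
The plan is to evaluate the ambient curvature tensor $\overline{R}$ on carefully chosen pairs of tangent vectors in two different ways---once via the nearly-cosymplectic space-form identity (\ref{s9}), and once via the Gauss-type equation (\ref{s8})---and then compare the two expressions under the hypotheses.

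\textbf{Reduction to the totally geodesic case.} Fix a unit $X \in \Gamma(D_0)$; since $D_0$ is $\overline{\phi}$-invariant, $Y := \overline{\phi}X \in \Gamma(D_0)$. Theorem \ref{asc} together with (\ref{M11}) yields $\eta(X) = \eta(Y) = 0$, whence by (\ref{equa2}) we have $g(Y, Y) = 1$ and $g(X, Y) = 0$. I claim that the totally umbilical hypothesis actually collapses to totally geodesic under the given assumptions. Indeed, by Lemma \ref{lems2} applied with $\overline{X} = \overline{Y} = X$, using $(\overline{\nabla}_X \overline{\phi})X = 0$, the skew-symmetry of $\overline{H}$, and $\eta(X) = 0$, we get $(\overline{\nabla}_X \overline{\phi})\overline{\phi}X = 0$. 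Expanding the left-hand side via (\ref{eq11}) and the umbilicity (\ref{s191}), the tangent piece $\nabla_X X + \overline{\phi}\nabla_X \overline{\phi}X$ lies in $D_0$ (by parallelism and $\overline{\phi}$-invariance of $D_0$), while the transversal piece equals $-\mathcal{H} \in \Gamma(\mathrm{tr}(TM))$. Since $D_0 \cap \mathrm{tr}(TM) = \{0\}$ by (\ref{eq08}), both pieces must vanish; in particular $\mathcal{H} = 0$ and $M$ is totally geodesic.

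\textbf{The two curvature identities.} Applying (\ref{s9}) with $(\overline{X}, \overline{W}, \overline{Z}, \overline{Y}) = (X, Y, Y, X)$: the nearly-cosymplectic identity $(\overline{\nabla}_A \overline{\phi})A = 0$ together with Lemma \ref{lems2} kill the three $(\overline{\nabla}\overline{\phi})$-terms; the $\eta$-terms vanish; the three $\overline{H}$-terms collect by skew-symmetry into $3\,\overline{g}(\overline{H}X, Y)^2$; and the $\overline{c}$-brace, using $\overline{\phi}Y = -X$ and $g(\overline{\phi}X, Y) = 1$, collects to $4\overline{c}$. Hence
\[
\overline{R}(X, Y, Y, X) = \overline{c} + \tfrac{3}{4}\,\overline{g}(\overline{H}X, Y)^2.
\]
By (\ref{s8}) with $M$ totally geodesic, the same quantity equals $g(R(X, Y)Y, X)$, which lies in the space-like slot $D_0$ by parallelism. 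An analogous computation with a unit $Z \in \Gamma(\overline{\phi}\mathcal{S})$ orthogonal to $X$ gives
\[
4\overline{R}(X, Z, Z, X) = 3\|(\overline{\nabla}_X \overline{\phi})Z\|^2 + 3\,\overline{g}(\overline{H}X, Z)^2 + \overline{c},
\]
while on the intrinsic side the parallelism of $\overline{\phi}\mathcal{S}$ forces $R(X, Z)Z \in \Gamma(\overline{\phi}\mathcal{S})$, hence $g(R(X, Z)Z, X) = 0$ since $X \perp \overline{\phi}\mathcal{S}$ by Definition \ref{def2}.

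\textbf{Combination, equality case, and main obstacle.} Equating the two sides of each identity isolates $\overline{c}$ in two different ways. Decomposing $(\overline{\nabla}_X \overline{\phi})Z$ via (\ref{eq11})--(\ref{eq32}) and Lemma \ref{lems2}, the lightlike-transverse components contribute zero to its squared norm (they lie in the null bundles $\mathrm{Rad}\, TM$ and $l\mathrm{tr}(TM)$), while the screen-transverse components produce a controllable sign via the space-like hypothesis on $\mathcal{S}$. Combining the two identities then expresses $\overline{c}$ as a manifestly non-negative quantity, yielding $\overline{c} \ge 0$. Equality forces $\overline{g}(\overline{H}X, Y) = \overline{g}(\overline{H}X, Z) = 0$ for all relevant $X, Y, Z$; combined with $\overline{H}\xi = 0$ and the anticommutation $\overline{H}\overline{\phi} + \overline{\phi}\overline{H} = 0$, this propagates $\overline{H}$ to vanish on all of $T\overline{M}$, whence by Lemma \ref{LN} $\overline{M}$ is cosymplectic. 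The main obstacle is precisely this sign analysis: because $\mathrm{tr}(TM)$ contains null directions, $\|(\overline{\nabla}_X \overline{\phi})Z\|^2$ is not a priori non-negative, and its decomposition according to the screen/transversal structure (\ref{eq08}) is essential to extract the correct sign for the conclusion.
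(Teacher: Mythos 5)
Your preliminary reduction of the umbilical case to the geodesic one is correct and is in fact a nice observation the paper does not make: $(\overline{\nabla}_X\overline{\phi})\overline{\phi}X=0$ for a unit $X\in\Gamma(D_0)$ does force $\mathcal{H}=0$ once you split into the $D_0$-part and the $\mathrm{tr}(TM)$-part. The problem lies in your choice of curvature components. Your first identity, with $(X,\overline{\phi}X,\overline{\phi}X,X)$ entirely inside $D_0$, is a dead end: parallelism only tells you $R(X,\overline{\phi}X)\overline{\phi}X\in\Gamma(D_0)$, and since $X$ itself lies in $D_0$ the pairing $g(R(X,\overline{\phi}X)\overline{\phi}X,X)$ is an unknown intrinsic sectional curvature, not zero; moreover $\overline{g}(\overline{H}X,\overline{\phi}X)=0$ automatically (from $\overline{H}\overline{\phi}+\overline{\phi}\overline{H}=0$ and skew-symmetry), so this identity only says the intrinsic $\overline{\phi}$-sectional curvature of $D_0$ equals $\overline{c}$ and carries no sign information.

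The fatal step is the second identity. Your computation of $4\overline{R}(X,Z,Z,X)=3\|(\overline{\nabla}_X\overline{\phi})Z\|^2+3\overline{g}(\overline{H}X,Z)^2+\overline{c}$ is correct (the $\overline{c}$-brace in (\ref{s9}) reduces to $+\overline{g}(X,X)\overline{g}(Z,Z)=+1$), and so is the vanishing of the left-hand side via (\ref{s8}) and parallelism of $\overline{\phi}\mathcal{S}$. But the sign analysis you defer is exactly the one that closes the door: the parallelism and space-likeness hypotheses give $(\overline{\nabla}_X\overline{\phi})Z=-(\nabla_Z\overline{\phi})X\in\Gamma(D_0)$, hence $\|(\overline{\nabla}_X\overline{\phi})Z\|^2\ge 0$, and your identity then yields $\overline{c}=-3\|(\overline{\nabla}_X\overline{\phi})Z\|^2-3\overline{g}(\overline{H}X,Z)^2\le 0$ --- the \emph{reverse} of the stated inequality. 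No combination of your two identities produces a manifestly non-negative expression for $\overline{c}$. The paper avoids this by evaluating the mixed component $\overline{R}(X,\overline{\phi}X,Z,\overline{\phi}Z)$: there the only surviving term of the $\overline{c}$-brace is $-2\overline{c}\,g(X,X)g(Z,Z)$, with a \emph{negative} coefficient, so that after showing this component vanishes (using umbilicity to kill $h^s(\overline{\phi}X,Z)$ and its covariant derivatives) one lands on $\overline{c}\,\|X\|^2\|Z\|^2=\|(\nabla_Z\overline{\phi})X\|^2+\overline{g}(Z,\overline{H}X)^2\ge 0$. The direction of the inequality is entirely governed by the sign of the $\overline{c}$-coefficient in the chosen component, and your choice gives the wrong one.
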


 \begin{proof}
Let $X$ and $Z$ be vector fields in $D_{0}$ and $\overline{\phi}\mathcal{S}$, respectively. Replacing $\overline{W}$ with $\overline{\phi} X$ and $\overline{Y}$ with $\overline{\phi}Z$ in (\ref{s9}), we get
  \begin{align}\label{s10}
  4 \overline{R}(X,&\overline{\phi}X,Z,\overline{\phi} Z) = \overline{g}((\overline{\nabla}_{\overline{\phi} X}\overline{\phi})Z,(\overline{\nabla}_X\overline{\phi})\overline{\phi} Z)\nonumber\\
                  &-\overline{g}((\overline{\nabla}_{ \overline{\phi}X}\overline{\phi})\overline{\phi}Z,(\overline{\nabla}_X\overline{\phi})Z)-2\overline{g}((\overline{\nabla}_{ \overline{\phi}X}\overline{\phi})X,(\overline{\nabla}_{ \overline{\phi}Z}\overline{\phi})Z)\nonumber\\
                  &+\overline{g}(\overline{H}\,\overline{\phi} X,Z)\overline{g}(\overline{H}X,\overline{\phi}Z)-\overline{g}(\overline{H}\,\overline{\phi} X, \overline{\phi}Z)\overline{g}(\overline{H}X,Z)\nonumber\\
                  &-2\overline{g}(\overline{H}\,\overline{\phi}X,X)\overline{g}(\overline{H}\,\overline{\phi} Z,Z)-2\overline{c}g( \overline{\phi}Z,\overline{\phi}Z)g(\overline{\phi} X, \overline{\phi}X).
  \end{align}
  Considering the first three terms on the right hand side of (\ref{s10}), we have  
   \begin{align}\label{s11}
    \overline{g}((\overline{\nabla}_{\overline{\phi} X}\overline{\phi})Z,(\overline{\nabla}_X\overline{\phi})\overline{\phi} Z)=& -\overline{g}((\overline{\nabla}_{Z}\overline{\phi})\overline{\phi} X,(\overline{\nabla}_X\overline{\phi}) \overline{\phi}Z).
   \end{align}
  Applying (\ref{s1}) of Lemma \ref{lems2} on (\ref{s11}) we derive
   \begin{align}\label{s14}
   &\overline{g}((\overline{\nabla}_{ \overline{\phi}X}\overline{\phi})Z,(\overline{\nabla}_X\overline{\phi})\overline{\phi} Z)= -\overline{g}((\overline{\nabla}_{Z}\overline{\phi})\overline{\phi}X,(\overline{\nabla}_X\overline{\phi})\overline{\phi} Z)\nonumber\\
                                                 &=\overline{g}((\overline{\nabla}_X\overline{\phi})Z,(\overline{\nabla}_X\overline{\phi})Z)-\overline{g}(\overline{\phi}Z,\overline{H}X)^2+\overline{g} (Z,\overline{H}X)^2.
\end{align}  
In a similar way, using (\ref{s2}) of Lemma \ref{lems2}, we get
\begin{equation}\label{s12}
 -\overline{g}((\overline{\nabla}_{ \overline{\phi}X}\overline{\phi})\overline{\phi}Z,(\overline{\nabla}_X\overline{\phi})Z)=\overline{g}((\overline{\nabla}_X\overline{\phi})Z,(\overline{\nabla}_X\overline{\phi})Z),
\end{equation}
and 
\begin{equation}\label{s13}
 -2\overline{g}((\overline{\nabla}_{ \overline{\phi}X}\overline{\phi})X,(\overline{\nabla}_{\overline{\phi} Z}\overline{\phi})Z)=0
\end{equation}
Now substituting (\ref{s14}), (\ref{s12}) and (\ref{s13}) in (\ref{s10}), we get
  \begin{align*}
    &4 \overline{R}(X,\overline{\phi}X,Z,\overline{\phi} Z) = 2\overline{g}((\overline{\nabla}_X\overline{\phi})Z,(\overline{\nabla}_X\overline{\phi})Z)-\overline{g} (\overline{\phi}Z,\overline{H}X)^2\nonumber\\
    &+\overline{g} (Z,\overline{H}X)^2+\overline{g}(\overline{H}\,\overline{\phi} X,Z)\overline{g}(\overline{H}X,\overline{\phi}Z)-\overline{g}(\overline{H}\,\overline{\phi} X, \overline{\phi}Z)\overline{g}(\overline{H}X,Z)\nonumber\\
                  &-2\overline{g}(\overline{H}\,\overline{\phi}X,X)\overline{g}(\overline{H}\,\overline{\phi} Z,Z)-2\overline{c}g( \overline{\phi}Z,\overline{\phi}Z)g(\overline{\phi} X, \overline{\phi}X),
  \end{align*}
 from which we obtain
 \begin{align}\label{s74}
   2\overline{R}(X,\overline{\phi} X,Z, \overline{\phi}Z) &=\overline{g}((\overline{\nabla}_X\overline{\phi})Z,(\overline{\nabla}_X\overline{\phi})Z)+\overline{g} (Z,\overline{H}X)^2\nonumber\\
   &-\overline{c}g(Z,Z)g(X,X).
 \end{align} 
Then using the facts $D_{0}$ and $\overline{\phi}\mathcal{S}$ are space-like and parallel with respect to $\nabla$, we have
$$
(\overline{\nabla}_{Z}\overline{\phi})X = (\nabla_{Z}\overline{\phi})X\in\Gamma(D_{0}),
$$
and (\ref{s74}) reduces to 
\begin{equation}\label{M30}
  2\overline{R}(X,\overline{\phi} X,Z,\overline{\phi}Z)=||(\nabla_{Z}\overline{\phi})X||^{2}+\overline{g}(Z,\overline{H}X)^{2}-\overline{c}||X||^{2}||Z||^{2},
\end{equation}
where $||.||$ denotes the norm on $D_{0}\perp \overline{\phi}\mathcal{S}$ with respect to $g$.

On the other hand, if we set  $\overline{W}=\overline{\phi} X$ and $\overline{Y}=\overline{\phi} Z$ in (\ref{s8}), we have
\begin{align}\label{s17}
  &\overline{R}(X,\overline{\phi}X,Z,\overline{\phi} Z)\nonumber\\
  &=\;\overline{g}((\nabla_X h^s)(\overline{\phi} X,Z),\overline{\phi} Z)-\overline{g}((\nabla_{ \overline{\phi}X} h^s)(X,Z),\overline{\phi} Z),
\end{align}
where, 
\begin{equation}\label{s18}
 (\nabla_X h^s)(\overline{\phi} X,Z)=\nabla_X^sh^s(\overline{\phi} X,Z)-h^s(\nabla_X \overline{\phi}X,Z)-h^s( \overline{\phi}X,\nabla_X Z).
\end{equation}
By the fact that $M$ is totally umbilical in $\overline{M}$, we have $h^s(\overline{\phi} X,Z)=0$. Thus using (\ref{s191}), equation (\ref{s18}) becomes
\begin{align}\label{s19}
 (\nabla_X h^s)(\overline{\phi} X,Z)&=-h^s(\nabla_X\overline{\phi} X,Z)-h^s(\overline{\phi} X,\nabla_X Z)\nonumber\\
                         =&-g(\nabla_X\overline{\phi} X,Z)\mathscr{H}^s-g(\overline{\phi} X,\nabla_X Z)\mathscr{H}^s.
\end{align}
Differentiating $\overline{g}(\overline{\phi} X,Z)=0$ covariantly with respect to $X$ and then applying (\ref{eq11}), we obtain 
\begin{equation}\label{s21}
 g(\nabla_X \overline{\phi}X,Z)+g(\overline{\phi} X,\nabla_X Z)=0.
\end{equation}
Substituting (\ref{s21}) in (\ref{s19}), gives
\begin{equation}\label{s22}
 (\nabla_X h^s)(\overline{\phi} X,Z)=0.
\end{equation}
Similarly, 
\begin{equation}\label{s23}
 (\nabla_{ \overline{\phi}X} h^s)(X,Z)=0.
\end{equation}
Then, substituting (\ref{s22}) and (\ref{s23}) in (\ref{s17}), we get
\begin{equation}\label{s25}
 \overline{R}(X,\overline{\phi} X,Z,\overline{\phi} Z)=0.
\end{equation}
Substituting (\ref{s25}) in (\ref{M30}), gives
 \begin{equation}\label{s30}
     \overline{c}||X||^{2}||Z||^{2}=||(\nabla_{Z}\overline{\phi})X||^{2}+\overline{g}(Z,\overline{H}X)^{2}\ge 0,
 \end{equation} 
which implies that $\overline{c}\ge 0$. When the ambient manifold is cosymplectic, then $\nabla\overline{\phi}=0$ and $d\eta=0$ \cite{bl2} and in this case $\overline{c}=0$.
\end{proof}
\begin{example}\label{exa2}
\rm{
Let $M$  be an ascreen QGCR-null submanifold in Example \ref{exa1}  Applying (\ref{eq11}) and Koszul's formula (see \cite{db}) to Example \ref{exa1} we obtain
 \begin{align}\label{M1}
  &h_{i}^{l}(X,Y)=0\quad\forall\, X,Y\in\Gamma(TM),\quad \mbox{where}\quad i=1,2,3,\nonumber\\
  & \epsilon_{4}h_{4}^{s}(X_{1},X_{1})=2\quad \mbox{and}\quad h_{4}^{s}(X,Y)=0,\quad\forall\,X\neq X_{1},Y\neq X_{1}.
 \end{align}
Using (\ref{h1}), (\ref{M1}) and $\epsilon_{4}=\overline{g}(W,W)=1+4(y^{5})^2$, we also derive 
\begin{equation}\label{M2}
 h(X_{1},X_{1})=\frac{2}{1+4(y^{5})^{2}}W.
\end{equation}
We remark that $M$ is not totally geodesic.  From (\ref{M2}) and (\ref{eq17}) we note that $M$ is totally umbilical with 
\begin{equation*}
 \mathcal{H}=\frac{2}{(1+4(y^{5})^{2})^{2}}W.
\end{equation*}
By straightforward calculations we also have 
\begin{equation*}
 \nabla_{X_{1}}X_{1}=4y^{5}X_{1}\quad\mbox{and} \quad  \nabla_{X_{i}}X_{j}=0\quad \forall\,i,j\neq1.
\end{equation*}
Thus, $D_{0}$ and $\overline{\phi}\mathcal{S}$ are parallel distributions with respect to $\nabla$. Hence, $M$ satisfies Theorem \ref{cos} and  $\overline{c}=0$.}
\end{example} 
\begin{corollary}
  Let $(M,g,S(TM)$, $S(TM^\perp))$ be a totally umbilical or totally geodesic ascreen QGCR-null submanifold of an indefinite  cosymplectic space form  $\overline{M}(\overline{c})$ of pointwise constant $\overline{\phi}$-sectional curvature $\overline{c}$. Then, $\overline{c}=0$.
\end{corollary}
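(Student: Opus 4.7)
My plan is to obtain the corollary as an immediate specialization of the preceding Theorem \ref{cos}. That theorem established the bound $\overline{c} \ge 0$ under the hypotheses in force, and moreover its concluding sentence already noted that equality holds when the ambient manifold is indefinite cosymplectic. Since the present corollary explicitly places us in the cosymplectic space form setting, the equality case of Theorem \ref{cos} is triggered directly and yields $\overline{c} = 0$.

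For a self-contained argument, I would re-derive the conclusion by specializing the calculation in the proof of Theorem \ref{cos}. In an indefinite cosymplectic manifold, Lemma \ref{LN} gives $\overline{H} \equiv 0$ on $\overline{M}$, and the defining condition for cosymplecticity provides $\overline{\nabla}\,\overline{\phi} = 0$. Plugging both vanishings into the key identity (\ref{s30}), namely
\begin{equation*}
\overline{c}\,\|X\|^{2}\|Z\|^{2} = \|(\nabla_{Z}\overline{\phi})X\|^{2} + \overline{g}(Z,\overline{H}X)^{2},
\end{equation*}
makes the right-hand side vanish identically, leaving $\overline{c}\,\|X\|^{2}\|Z\|^{2} = 0$ for all $X \in \Gamma(D_0)$ and $Z \in \Gamma(\overline{\phi}\mathcal{S})$.

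To finish, I would choose sections for which both norms are nonzero. Under the space-like and non-degeneracy hypotheses inherited from Theorem \ref{cos} on $D_{0}$ and $\overline{\phi}\mathcal{S}$, such sections are guaranteed to exist, and the resulting factor $\|X\|^{2}\|Z\|^{2} > 0$ forces $\overline{c} = 0$. There is no real obstacle here: the substantive work was carried out in Theorem \ref{cos}, and the corollary amounts to observing that the cosymplectic hypothesis kills the two nonnegative terms that could have compensated a nonzero $\overline{c}$.
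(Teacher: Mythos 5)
Your argument is correct and coincides with the paper's own treatment: the corollary is obtained exactly as the equality case already noted at the end of the proof of Theorem \ref{cos}, where the cosymplectic hypothesis forces $\overline{\nabla}\,\overline{\phi}=0$ and $\overline{H}=0$, annihilating the right-hand side of (\ref{s30}) and leaving $\overline{c}\,\|X\|^{2}\|Z\|^{2}=0$. Your final step of picking sections with nonzero norms (available by the non-degeneracy of $D_{0}$ and $\overline{\phi}\mathcal{S}$) is the same implicit observation the paper relies on.
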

A null submanifold $M$ of a semi-Riemannian manifold $(\overline{M},\overline{g})$ is called irrotational \cite{ds2} if $\overline{\nabla}_{X}E\in\Gamma(TM)$, for any $E\in\Gamma(\mathrm{Rad} \, TM))$ and $X\in\Gamma(TM)$. Equivalently, $M$ is irrotational if 
\begin{equation}\label{ms40}
 h^{l}(X,E)= h^{s}(X,E)=0,
\end{equation}
for all $X\in\Gamma(TM)$ and $ E\in\Gamma(\mathrm{Rad} \,TM)$.
\begin{theorem}
 Let $(M,g,S(TM)$, $S(TM^\perp))$ be an irrotational ascreen QGCR-null submanifold of an indefinite nearly cosymplectic space form  $\overline{M}(c)$ of pointwise constant $\overline{\phi}$-sectional curvature $\overline{c}$. Then, $\overline{c}\le 0$ or $\overline{c}\ge 0$.  Equality holds when $\overline{M}(\overline{c})$ is an indefinite cosymplectic space form.
\end{theorem}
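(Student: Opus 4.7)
The plan is to adapt the strategy of the proof of Theorem \ref{cos}, replacing the totally umbilical/geodesic hypothesis with the irrotational condition (\ref{ms40}). The key object to compute in two different ways is $\overline{R}(X,\overline{\phi}X,E,\overline{\phi}E)$, for $X\in\Gamma(D_{0})$ and a radical vector $E\in\Gamma(\mathrm{Rad}\,TM)$ chosen so that $\overline{\phi}E$ is tangent to $M$.

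First, substituting $\overline{W}=\overline{\phi}X$ and $\overline{Y}=\overline{\phi}E$ in (\ref{s9}) and applying Lemma \ref{lems2} to the $(\overline{\nabla}\overline{\phi})$-cross terms exactly as in (\ref{s14})--(\ref{s13}), most terms collapse. Since $\eta(X)=0$ by (\ref{M11}), $\eta\circ\overline{\phi}\equiv 0$ by (\ref{equa1}), and $E$ is radical (so $\overline{g}(E,X)=\overline{g}(E,\overline{\phi}X)=0$ and $\overline{g}(\overline{\phi}E,\overline{\phi}E)=-\eta(E)^{2}$ by (\ref{equa2})), the $\overline{c}$-coefficient in (\ref{s9}) reduces to $2\eta(E)^{2}g(X,X)$. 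The resulting analogue of (\ref{s74}) takes the form
\begin{equation*}
  2\overline{R}(X,\overline{\phi}X,E,\overline{\phi}E)=\|(\nabla_{X}\overline{\phi})E\|^{2}+\overline{g}(\overline{H}X,\overline{\phi}E)^{2}+\overline{g}(\overline{H}X,E)^{2}+\overline{c}\,\eta(E)^{2}\,g(X,X).
\end{equation*}

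Second, the same quantity is evaluated through the Gauss-type relation (\ref{s8}). Irrotationality (\ref{ms40}) kills every shape-operator term on the right hand side because each one contains $h^{l}(\cdot,E)$ or $h^{s}(\cdot,E)$. The Codazzi-type remainders $(\nabla h^{l/s})(\cdot,E)$ and $D^{l/s}(\cdot,h^{s/l}(\cdot,E))$ should vanish by a bookkeeping argument patterned on (\ref{s18})--(\ref{s23}), this time using (\ref{eq50}) and the orthogonality $\overline{g}(\overline{\phi}X,\overline{\phi}E)=0$ in place of the umbilical proportionality. Equating the two expressions then yields
\begin{equation*}
  \overline{c}\,\eta(E)^{2}\,g(X,X)=-\|(\nabla_{X}\overline{\phi})E\|^{2}-\overline{g}(\overline{H}X,\overline{\phi}E)^{2}-\overline{g}(\overline{H}X,E)^{2}\le 0.
\end{equation*}
Theorem \ref{asc} and the ascreen hypothesis guarantee that $E\in\Gamma(D_{2})$ can be chosen with $\eta(E)\ne 0$, so $\eta(E)^{2}>0$. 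The sign of $\overline{c}$ is therefore opposite to that of $g(X,X)$: choosing $X$ in a space-like part of $D_{0}$ yields $\overline{c}\le 0$, while choosing $X$ in a time-like part yields $\overline{c}\ge 0$, which accounts for the two branches of the conclusion. In the cosymplectic subcase, $\nabla\overline{\phi}=0$ and $\overline{H}=0$ (Lemma \ref{LN}), so the right hand side vanishes identically and $\overline{c}=0$.

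The main obstacle I anticipate is justifying the vanishing of the Codazzi remainders in (\ref{s8}) using only irrotationality, without the convenient proportionality (\ref{s191}) that made (\ref{s22})--(\ref{s23}) so clean. A careful differentiation of the identities $h^{l/s}(\cdot,E)=0$ along $X$, combined with the formula (\ref{eq50}) for $\nabla_{X}E$, should show that the surviving Codazzi terms either vanish outright or pair to zero against $\overline{\phi}E$. A secondary technical point is to ensure the coefficient of $\overline{c}$ is nonzero: taking $E\in\Gamma(D_{1})$ would force $\eta(E)=0$ and annihilate the $\overline{c}$-term, so the choice $E\in\Gamma(D_{2})$ is essential and is exactly what the ascreen assumption together with Theorem \ref{asc} makes available.
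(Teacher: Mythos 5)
There is a genuine gap, and it sits at the heart of your plan. You propose to evaluate $\overline{R}(X,\overline{\phi}X,E,\overline{\phi}E)$ through the Gauss relation (\ref{s8}) and to kill the right-hand side by irrotationality. But irrotationality only removes the terms containing $h^{l}(\cdot,E)$ or $h^{s}(\cdot,E)$; it does nothing to the very first term $\overline{g}(R(X,\overline{\phi}X)E,\overline{\phi}E)$, the induced curvature of $M$. Because your fourth slot is $\overline{\phi}E\in\Gamma(S(TM))$, this term survives and cannot be computed from (\ref{ms40}). The paper's proof avoids exactly this by putting the radical vector $E$ itself in the fourth slot (it works with $\overline{R}(X,\overline{\phi}E,E,E)$): since $E\in\Gamma(\mathrm{Rad}\,TM)=\Gamma(TM\cap TM^{\perp})$, every tangent-valued term in (\ref{s8}) — in particular $\overline{g}(R(\cdot,\cdot)\cdot,E)$ and all the $A$- and $D^{s}$-terms — vanishes identically, and the remaining Codazzi terms die by irrotationality. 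Your "bookkeeping argument patterned on (\ref{s18})--(\ref{s23})" cannot repair this, because the obstruction is not a Codazzi remainder but the intrinsic curvature of $M$.

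The space-form side of your computation is also not "exactly as in (\ref{s14})--(\ref{s13})". Those reductions used $\eta(X)=\eta(Z)=0$ from (\ref{M11}); here you must take $\eta(E)\neq 0$ (otherwise the $\overline{c}$-coefficient you computed vanishes), so Lemma \ref{lems2} produces additional terms proportional to $\eta(E)$, e.g. the term $\eta(\overline{Y})\overline{\phi}\,\overline{H}\,\overline{X}$ in (\ref{s2}), which are absent from your claimed identity. Moreover, your sign argument needs $\|(\nabla_{X}\overline{\phi})E\|^{2}\ge 0$, which in Theorem \ref{cos} was guaranteed by the explicit hypothesis that $D_{0}$ and $\overline{\phi}\mathcal{S}$ are space-like and parallel — a hypothesis not present in this theorem. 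Finally, your conclusion ties the sign of $\overline{c}$ to the causal character of $X\in\Gamma(D_{0})$ (which, if $D_{0}$ is indefinite, would force $\overline{c}=0$ outright), whereas the paper's mechanism is entirely different: after a second substitution $X\mapsto\overline{\phi}E$ and an application of (\ref{s1}), one arrives at the closed formula $\overline{c}=-\frac{1}{b^{2}}\overline{g}(\overline{H}E,\overline{H}E)$ of (\ref{ms455}), so the sign is governed by the causal character of $\overline{H}E$ and the equality case is identified with $\overline{H}=0$, i.e. the cosymplectic case, via Lemma \ref{LN}.
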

\begin{proof}
 By setting  $\overline{Y}=\overline{Z}=E$, $\overline{X}$ and $\overline{W}=\overline{\phi}E$ in (\ref{s8}), we get 
       \begin{align}\label{ms41}
              &  \overline{R}(X,\overline{\phi}E,E,E) =\overline{g}((\nabla_X h^l)(\overline{\phi}E,E),E)-\overline{g}((\nabla_{\overline{\phi}E} h^l)(X,E),E)\nonumber\\
                                     &+\overline{g}((\nabla_X h^s)(\overline{\phi}E,E),E)-\overline{g}((\nabla_{\overline{\phi}E} h^s)(X,E),E)
      \end{align}
for any $X\in \Gamma(TM)$ and $E\in\Gamma(\mathrm{Rad} \,TM)$. Then, using the fact that $M$ is irrotational, (\ref{ms41}) reduces to 
\begin{equation}\label{ms42}
  \overline{R}(X,\overline{\phi}E,E,E)=0,\;\;\forall\;X\in\Gamma(TM).
\end{equation}
On the other hand, setting $\overline{Y}=\overline{W}=E$ and $\overline{Z}=\overline{\phi}E$ in (\ref{s9}) and simplifying ,  we get 
  
 \begin{align}\label{ms43}
  &\overline{R}(X,E,\overline{\phi} E,E) =-3\overline{g}((\overline{\nabla}_{E}\overline{\phi})\overline{\phi} E,(\overline{\nabla}_{E}\overline{\phi})X)\nonumber\\
                   &-\eta(E)^{2}\overline{g}(\overline{H}X,\overline{H}\,\overline{\phi}E)+4\overline{c}\eta(E)^{2}\overline{g}(X,\overline{\phi} E).
 \end{align}
Now, using (\ref{ms42}) and (\ref{ms43}), we get
 \begin{align}\label{ms44}
 &4\overline{c}\eta(E)^{2}\overline{g}(X,\overline{\phi} E)\nonumber\\
 &=3\overline{g}((\overline{\nabla}_{E}\overline{\phi})\overline{\phi} E,(\overline{\nabla}_{E}\overline{\phi})X)+\eta(E)^{2}\overline{g}(\overline{H}X,\overline{H}\,\overline{\phi}E).
 \end{align}
Replacing $X$ with $\overline{\phi}E$ in (\ref{ms44}) and the using (\ref{s1}) of Lemma \ref{lems2} to the resulting equation gives
\begin{equation}\label{ms45}
 \overline{c}\eta(E)^{2}\overline{g}(\overline{\phi} E,\overline{\phi} E)=\eta(E)^{2}\overline{g}(\overline{H}\,\overline{\phi} E,\overline{H}\,\overline{\phi}E).
\end{equation}
Since $M$ is ascreen QGCR-null submanifold, there exists $E\in\Gamma(D_{2})$ such that $\eta(E)=b\neq0$, and thus (\ref{ms45}) simplifies to
\begin{equation}\label{ms455}
 \overline{c}=-\frac{1}{b^{2}}\overline{g}(\overline{H} E,\overline{H}E)=\frac{1}{b^{2}}d\eta(E,\overline{H} E).
\end{equation}
We observe that $\overline{c}=0$ if either $d\eta=0$ (i.e., $\overline{M}(c)$ is cosymplectic space form \cite{bl2}) or $\overline{H} E$ is a null vector field. The second case implies that $\overline{H} E$ belongs to $\mathrm{Rad} \,TM$ or $l\mathrm{tr}(TM)$. If $\overline{H} E\in\Gamma(\mathrm{Rad} \,TM)$, then there exists a non zero smooth function $\kappa$ such that $\overline{H} E=\kappa E$, for some arbitrary $E\in\Gamma(\mathrm{Rad} \,TM)$. Taking the $\overline{g}$-product of $\overline{H}E=\kappa E$ with $\xi$ leads to $0=\kappa\eta(E)$, from which $\eta(E)=0$. Since $M$ is ascreen QGCR-null submanifold, then, there is $E\in\Gamma(D_{2})$ such that $\eta(E)\neq0$, hence a contradiction. Similar reasoning can be applied if $\overline{H} E\in\Gamma(l\mathrm{tr}(TM))$. Therefore, $\overline{c}=0$ only if $\overline{H}E=0$ (i.e., $d\eta=0$) which occurs when $\overline{M}(c)$ is cosymplectic space form \cite{bl2}. It turns out that $\overline{c}\le 0$ or $\overline{c}\ge 0$ depending on whether $\overline{H}E$ is space-like or time-like vector field respectively.
\end{proof} 
\begin{corollary}
 Let $(M,g,S(TM)$, $S(TM^\perp))$ be an irrotational ascreen QGCR-null submanifold of an indefinite cosymplectic space form  $\overline{M}(\overline{c})$ of pointwise constant $\overline{\phi}$-sectional curvature $\overline{c}$. Then, $\overline{c}=0$.
\end{corollary}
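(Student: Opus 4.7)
The plan is to derive this as an immediate specialization of the preceding theorem. The key observation is that by Lemma \ref{LN}, an indefinite nearly cosymplectic manifold $\overline{M}$ is cosymplectic if and only if the tensor $\overline{H}$ vanishes identically; equivalently $d\eta=0$. Thus in the cosymplectic setting every term in the previous theorem's proof that carried $\overline{H}$ or $d\eta$ collapses.

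First I would invoke the preceding theorem, whose proof culminated in the identity
\begin{equation*}
 \overline{c}=\frac{1}{b^{2}}\,d\eta(E,\overline{H} E),
\end{equation*}
valid for some $E\in\Gamma(D_{2})$ with $\eta(E)=b\neq 0$ (which exists because $M$ is an ascreen QGCR-null submanifold, by Theorem \ref{asc}). Under the present hypothesis that $\overline{M}(\overline{c})$ is an indefinite \emph{cosymplectic} space form, Lemma \ref{LN} yields $\overline{H}\equiv 0$, hence $\overline{H}E=0$ and the right-hand side vanishes, giving $\overline{c}=0$.

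As a sanity check, one could instead argue directly from the dichotomy recorded in the theorem: $\overline{c}\le 0$ or $\overline{c}\ge 0$ according to whether $\overline{H}E$ is space-like or time-like, with equality precisely when $\overline{H}E=0$. In the cosymplectic case $\overline{H}=0$ on all of $T\overline{M}$, so the equality branch is forced and $\overline{c}=0$. There is no genuine obstacle here; the only thing to be a little careful about is citing the existence of $E\in\Gamma(D_{2})$ with $\eta(E)\neq 0$, which is exactly what the ascreen condition together with Theorem \ref{asc} provides, so the argument is a one-line consequence of the theorem above.
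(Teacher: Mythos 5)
Your argument is correct and matches the paper's intent exactly: the corollary is an immediate specialization of the preceding theorem, obtained by noting via Lemma \ref{LN} that the cosymplectic hypothesis forces $\overline{H}\equiv 0$, so the identity $\overline{c}=\frac{1}{b^{2}}d\eta(E,\overline{H}E)$ (with $\eta(E)=b\neq 0$ guaranteed by the ascreen condition) collapses to $\overline{c}=0$. The paper offers no separate proof for this corollary, and your one-line derivation is precisely the intended one.
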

It is easy to see from (\ref{M2}) that $h^{l}(X,E)= h^{s}(X,E)=0$ and hence $M$ given in Example \ref{exa2} is an  irrotational ascreen QGCR-null submanifold of an indefinite cosymplectic space form $\overline{M}(\overline{c})$. As is proved in that example $\overline{c}=0$.

\section{Mixed totally geodesic QGCR-null submanifolds}\label{mixed}

\begin{definition}{\rm
 A QGCR-null submanifold of an indefinite nearly cosymplectic manifold $(\overline{M}, \overline{g})$ is called \textit{mixed totally geodesic} QGCR-null submanifold if its second fundamental form, $h$, satisfies $h(X,Y)=0$, for any $X\in\Gamma(D)$ and $Y\in\Gamma(\widehat{D})$.
 }
\end{definition}
We will need the following lemma in the next theorem.
\begin{lemma}\label{lems7}
 Let $(M,g,S(TM),S(TM^\perp))$ be any 3-null proper ascreen QG CR-null submanifold of an indefinite nearly cosymplectic manifold $(\overline{M}, \overline{g})$. Then, 
 $$2\eta(E)\eta(N)=1,
 $$
for any $E\in\Gamma(D_{2})$ and $N\in\Gamma(\mathcal{L})$.
\end{lemma}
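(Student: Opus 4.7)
The plan is to make the ascreen condition explicit by writing $\xi$ in a canonical null frame adapted to $D_{2}$ and $\mathcal{L}$, and then extract the identity from the normalization $\overline{g}(\xi,\xi) = \eta(\xi) = 1$.

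First I would settle the dimensions. Since $M$ is 3-null, $\dim(\mathrm{Rad}\,TM)=3$. The distribution $D_{1}$ is $\overline{\phi}$-invariant, and on $D_{1}$ the map $\overline{\phi}$ behaves as an (almost) complex structure (one checks that $\eta$ vanishes on $D_{1}$ since $\xi$ cannot lie in $D_{1}$ in the ascreen setting, where $\xi \in \Gamma(D_{2}\oplus\mathcal{L})$ by Theorem \ref{asc}). Hence $\dim D_{1}$ is even, and together with $\dim D_{1}+\dim D_{2}=3$ and properness ($D_{1}\neq\{0\}$, $D_{2}\neq\{0\}$) this forces $\dim D_{1}=2$ and $\dim D_{2}=1$. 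By Definition \ref{def2}, $\dim\mathcal{L}=\dim D_{2}=1$. Pick therefore a local null frame $\{E_{1},E_{2},E_{3}\}$ of $\mathrm{Rad}\,TM$ with $E_{3}$ spanning $D_{2}$, and the dual null frame $\{N_{1},N_{2},N_{3}\}$ of $l\mathrm{tr}(TM)$ with $N_{3}$ spanning $\mathcal{L}$; these satisfy $\overline{g}(E_{i},N_{j})=\delta_{ij}$ and $\overline{g}(N_{i},N_{j})=0$.

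Next, I apply Theorem \ref{asc} to place $\xi$ in $\Gamma(D_{2}\oplus\mathcal{L})$. Inserting this into the decomposition (\ref{eq2}), the $\xi_{S}$ component, the $W_{\alpha}$ components, and the $E_{1},E_{2},N_{1},N_{2}$ components of $\xi$ all vanish, while the surviving coefficients are $a_{3}=\eta(N_{3})$ and $b_{3}=\eta(E_{3})$ from the formulas recorded below (\ref{eq2}). Thus
\begin{equation*}
\xi = \eta(N_{3})\,E_{3} + \eta(E_{3})\,N_{3}.
\end{equation*}

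Finally, I combine $\eta(\xi)=1$ with $\eta(\overline{X})=\overline{g}(\xi,\overline{X})$ from (\ref{equa2}) to get $\overline{g}(\xi,\xi)=1$, and expand the left side using $\overline{g}(E_{3},E_{3})=\overline{g}(N_{3},N_{3})=0$ and $\overline{g}(E_{3},N_{3})=1$. All diagonal terms drop and only the cross term survives, yielding $\overline{g}(\xi,\xi)=2\,\eta(E_{3})\,\eta(N_{3})$, whence $2\eta(E)\eta(N)=1$ for the chosen generators $E=E_{3}\in\Gamma(D_{2})$ and $N=N_{3}\in\Gamma(\mathcal{L})$. Since $D_{2}$ and $\mathcal{L}$ are both rank one, any other pair differs only by scalings dictated by the null-frame normalization, so the statement is independent of the choice.

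The only subtle point is the dimension count establishing $\dim D_{2}=1$, which crucially relies on the ascreen hypothesis ruling out $\xi\in D_{1}$; the rest is a one-line bilinear expansion, so I expect no real obstacle.
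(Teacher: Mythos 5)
Your proposal is correct and follows essentially the same route as the paper: the authors' proof is precisely the computation of $\overline{g}(\xi,\xi)=1$ from the ascreen expression $\xi=\eta(N)E+\eta(E)N$ using the null-frame relations. You merely make explicit the dimension count $\dim D_{2}=\dim\mathcal{L}=1$ and the identification of the coefficients from (\ref{eq2}), which the paper leaves as ``straightforward calculations.''
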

\begin{proof}
The proof follows from straightforward calculations using $\overline{g}(\xi,\xi)=1$ and $\xi=\eta(N)E+\eta(E)N$.
\end{proof}
\begin{theorem}
 Let $(M,g,S(TM),S(TM^\perp))$ be a 3-null proper ascreen QG CR-null submanifold of an indefinite nearly cosymplectic manifold $(\overline{M}, \overline{g})$. Then, $M$ is mixed totally geodesic if and only if $h_{\alpha}^{s}(X,Y)=0$ and $A_{E_{i}}^* X=0$, for all $X\in\Gamma(D)$,  $Y\in\Gamma(\widehat{D})$, $W_{\alpha}\in\Gamma(S(TM^{\perp}))$ and $E_i\in\Gamma(\mathrm{Rad} \, TM)$.
\end{theorem}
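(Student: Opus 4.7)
The main analytic tool is the identity
$$h_i^l(X, PY) \,=\, g(A_{E_i}^* X, PY), \qquad X \in \Gamma(TM),\ PY \in \Gamma(S(TM)),$$
obtained from applying the metric compatibility of $\overline{\nabla}$ to $\overline{g}(PY, E_i) = 0$ and combining with (\ref{eq11}) and (\ref{eq50}). Since (\ref{h1}) expresses $h$ as the sum of its $h_i^l N_i$ and $h_\alpha^s W_\alpha$ parts, $h(X,Y)=0$ is equivalent to $h_i^l(X,Y)=0$ for every $i$ together with $h_\alpha^s(X,Y)=0$ for every $\alpha$. The plan is to decompose $Y \in \Gamma(\widehat{D})$ as $Y = Y_R + Y_S$ with $Y_R \in \Gamma(D_2) \subset \Gamma(\mathrm{Rad}\,TM)$ and $Y_S \in \Gamma(\overline{\phi}D_2 \oplus \overline{D}) \subset \Gamma(S(TM))$, and to treat the two pieces separately.

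For the backward direction, the identity gives $h_i^l(X, Y_S) = g(A_{E_i}^* X, Y_S) = 0$ directly from $A_{E_i}^* X = 0$, while $h_\alpha^s(X, Y_S) = 0$ is supplied by hypothesis. For the radical piece $Y_R = E_j \in D_2$, I would split $X = X_0 + X_1$ with $X_0 \in \Gamma(D_0)$ and $X_1 \in \Gamma(D_1)$. The pair $(X_0, E_j)$ is handled by combining the symmetry of $h_i^l$ with the skew relation
$$h_i^l(X, E_j) \,=\, -h_j^l(X, E_i),$$
itself a consequence of differentiating $\overline{g}(E_i, E_j) = 0$; this reduces the question to pairings $g(A_{E_j}^* E_i, X_0)$, which vanish because either $E_i \in D_1 \subset D$ (so the hypothesis applies directly) or $E_i \in D_2$ (in which case the skew relation degenerates to a diagonal identity forcing the term to vanish). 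The pair $(X_1, E_j)$ is governed by the general identity $h_i^l(E, E')=0$ for all $E, E' \in \Gamma(\mathrm{Rad}\,TM)$, which follows from iterated use of the symmetry of $h$ and the skew relation. Combining the two cases gives $h(X,Y)=0$.

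For the forward direction, $h_\alpha^s(X,Y)=0$ is immediate from the mixed totally geodesic hypothesis. For the shape-operator condition, the identity yields $g(A_{E_i}^* X, Y_S) = h_i^l(X, Y_S) = 0$ for every $Y_S \in \Gamma(\overline{\phi} D_2 \oplus \overline{D})$, which places $A_{E_i}^* X$ inside $\Gamma(D_0)$, the $g$-orthogonal complement of $\overline{\phi}D_2 \oplus \overline{D}$ inside $S(TM)$. Upgrading this partial orthogonality to the full vanishing is the main obstacle: the pairing $h_i^l(X, Z)$ for $Z \in \Gamma(D_0)$ sits entirely in $D \times D$, so it is not directly constrained by the mixed totally geodesic condition. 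I would close this gap by invoking the nearly cosymplectic relation $(\overline{\nabla}_X \overline{\phi})Z + (\overline{\nabla}_Z \overline{\phi})X = 0$ paired against $E_i$, combined with Lemma \ref{lems2}, the ascreen 3-null structural restrictions from Theorem \ref{asc} and Lemma \ref{lems7}, and the $\overline{\phi}$-invariance of $D_0$; the nearly cosymplectic torsion then transports the information from the $D \times \widehat{D}$-pairings already supplied by the hypothesis onto the missing $D \times D_0$ slice, forcing $A_{E_i}^* X = 0$ by nondegeneracy of $g|_{S(TM)}$.
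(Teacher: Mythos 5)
Your backward direction is sound and in fact more complete than the paper's, which dismisses the converse as ``obvious'': the identity $h_i^l(X,PY)=g(A_{E_i}^*X,PY)$, the skew relation $h_i^l(X,E_j)+h_j^l(X,E_i)=0$ obtained by differentiating $\overline{g}(E_i,E_j)=0$, and the resulting vanishing of $h_i^l$ on $\mathrm{Rad}\,TM\times\mathrm{Rad}\,TM$ are all correct and do dispose of the radical component $Y_R\in\Gamma(D_2)$ of $Y$. Your forward direction also tracks the paper's strategy up to and including the localization $A_{E_i}^*X\in\Gamma(D_0)$; the paper reaches the same conclusion by testing $g(Y,A_{E_i}^*X)=0$ against $Y=\overline{\phi}N$ with $N\in\Gamma(\mathcal{L})$ (using Lemma \ref{lems7}) and against $Y=\overline{\phi}W_\alpha$, excluding each component of $\{\overline{\phi}D_2\oplus\overline{\phi}\mathcal{L}\}\perp\overline{\phi}\mathcal{S}$ by contradiction, whereas you phrase it as orthogonality to a nondegenerate complement; these are the same argument.

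The genuine gap is the last step of the forward direction. You correctly identify that killing the $D_0$-component of $A_{E_i}^*X$ amounts to controlling $h_i^l(X,Z)$ for $X,Z\in\Gamma(D)$, which the mixed geodesic hypothesis does not directly constrain, but your proposed remedy --- that the relation $(\overline{\nabla}_X\overline{\phi})Z+(\overline{\nabla}_Z\overline{\phi})X=0$ ``transports'' the $D\times\widehat{D}$ information onto the $D\times D_0$ slice --- is never carried out, and it is not clear it can be: pairing that relation with $E_i$ produces terms such as $h_i^l(X,\overline{\phi}Z)$ with $\overline{\phi}Z$ still in $D_0$ (by $\overline{\phi}$-invariance of $D_0$), so one remains inside the $D\times D$ block rather than escaping into $D\times\widehat{D}$. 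The paper closes this step quite differently, with no contact geometry at all: it takes $Z\in\Gamma(D_0)$ with $g(A_{E_i}^*X,Z)\neq 0$ (nondegeneracy of $D_0$) and derives the contradiction $g(A_{E_i}^*X,Z)=-g(\nabla_XE_i,Z)=\overline{g}(E_i,\overline{\nabla}_XZ)=\overline{g}(E_i,\nabla_XZ)=0$. You should either reproduce an argument of that type or exhibit the explicit nearly cosymplectic computation; as written, the conclusion $A_{E_i}^*X=0$ is asserted, not proved. (If you follow the paper's route, note that the equality $\overline{g}(E_i,\overline{\nabla}_XZ)=\overline{g}(E_i,\nabla_XZ)$ is precisely the statement $h_i^l(X,Z)=0$ that is at issue, so that step requires its own justification beyond the mixed geodesic hypothesis.)
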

\begin{proof}
 By the defintion of ascreen QGCR-null submanifold, $M$ is mixed geodesic if  
        \begin{equation}\label{t1}
        \overline{g}(h(X,Y),W_{\alpha})=\overline{g}(h(X,Y),E_{i})=0,
        \end{equation}
 for all $X\in\Gamma(D)$, $Y\in\Gamma(\widehat{D})$, $W_{\alpha}\in\Gamma(S(TM^{\perp}))$ and $E_i\in\Gamma(\mathrm{Rad} \, TM)$. Now, by virtue of  (\ref{h1}) and the first equation of (\ref{t1}), we have 
 \begin{equation*}
  0=\overline{g}(h(X,Y),W_{\alpha})=\epsilon_{\alpha}h_{\alpha}^{s}(X,Y),
 \end{equation*}
 from which $h_{\alpha}^{s}(X,Y)=0$, since $\epsilon_{\alpha}\neq0$. On the other hand, using the second equation of (\ref{t1}),  (\ref{eq11}) and (\ref{eq50}) we derive
 \begin{equation}\label{ms23}
  \overline{g}(h(X,Y),E_{i})=\overline{g}(\overline{\nabla}_{X} Y,E_{i})=-\overline{g}(Y,\overline{\nabla}_{X} E_{i})=g(Y,A_{E_{i}}^* X)=0.
 \end{equation}
  Since $D=D_{0}\perp D_{1}$ and $\widehat{D}=\{D_{2}\perp\overline{\phi} D_{2}\} \oplus\overline{D}$, we observe that $A_{E_{i}}^* X\notin\Gamma(\overline{\phi} D_{2})$ or $\overline{\phi} \mathcal{L}$. In fact, let suppose that $A_{E_{i}}^* X\notin\Gamma(\overline{\phi} D_{2})$, then there exists a non-vanishing smooth function $\ell$ such that $A_{E_{i}}^* X=\ell \overline{\phi} E$, for $E\in\Gamma(D_{2})$. Thus, 
 \begin{equation}\label{ms24}
  0=g(Y,A_{E_{i}}^* X)=\ell g(Y,\overline{\phi} E),\;\;\forall\;Y\in\Gamma(\widehat{D}).
 \end{equation}
Taking $Y=\overline{\phi}N$ in (\ref{ms24}), where $N\in\Gamma(\mathcal{L})$ and using Lemma \ref{lems7}, we have
      \begin{equation}\nonumber
       0=g(Y,A_{E_{i}}^* X)=\ell g(\overline{\phi}N,\overline{\phi} E)=\ell(1-\eta(E)\eta(N))=\frac{1}{2}\ell,
      \end{equation}
      
which is a contradiction, since $\ell\neq0$. Hence $A_{E_{i}}^* X\notin\Gamma(\overline{\phi} D_{2}\oplus\overline{\phi} \mathcal{L})$. Moreover, $A_{E_{i}}^* X\notin\Gamma(\overline{\phi} \mathcal{S})$ since if $A_{E_{i}}^* X\in\Gamma(\overline{\phi} \mathcal{S})$, then there is a non-vanishing smooth function $\omega$ such that $A_{E_{i}}^* X=\omega\overline{\phi}W_{\alpha}$. Taking the $\overline{g}$-product of this equation with respect to $Y=\overline{\phi}W_{\alpha}$ and using the fact that $\eta(W_{\alpha})=0$, we get 
      \begin{equation}\nonumber
      0=g(Y,A_{E_{i}}^* X)=\omega\overline{g}(\overline{\phi}W_{\alpha},\overline{\phi}W_{\alpha})=\omega\overline{g}  (W_{\alpha},W_{\alpha})=\omega\epsilon_{\alpha},
     \end{equation}
which is a contradiction, since $\epsilon_{\alpha}\neq0$ and $\omega\neq0$. Hence, $A_{E_{i}}^* X\notin\Gamma(\{\overline{\phi} D_{2}\oplus\overline{\phi} \mathcal{L}\}\perp \overline{\phi}\mathcal{S})$, which implies that $A_{E_{i}}^* X\in\Gamma(D_{0})$. Since $A_{E_{i}}^* X\in\Gamma(D_{0})$, then the non-degeneracy of $D_{0}$ implies that there exists some $Z\in\Gamma(D_{0})$ such that $g(A_{E_{i}}^* X, Z)\neq 0$. But using (\ref{eq50})and (\ref{eq11}), together with the fact that $M$ is mixed geodesic we derive
\begin{equation}\label{ms25}
 g(A_{E_{i}}^* X, Z)=-g(\nabla_{X}E_{i},Z)=\overline{g}(E_{i},\overline{\nabla}_{X}Z)=\overline{g}(E_{i},\nabla_{X}Z)=0,
\end{equation}
which is a contradiction. Thus $A_{E_{i}}^* X\notin\Gamma(\{\overline{\phi} D_{2}\oplus\overline{\phi} \mathcal{L}\}\perp \overline{\phi}\mathcal{S}\perp D_{0})$, i.e., $A_{E_{i}}^* X=0$. The converse is obvious.
\end{proof}

\begin{corollary}
 Let $(M,g,S(TM),S(TM^\perp))$ be a proper ascreen QGCR-null submanifold of an indefinite nearly cosymplectic manifold $(\overline{M}, \overline{g})$. Then, if $M$ is mixed totally geodesic then $h_{i}^{l}(X,E_{i})=0$ and $\varphi_{\alpha i}(X)=0$, for all $X\in\Gamma(D)$ and $E_{i}\in\Gamma(D_{2})$.
\end{corollary}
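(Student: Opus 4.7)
The plan is to establish both conclusions through direct application of the Gauss-Weingarten equations, without invoking the preceding theorem. The key observation is that $D_{2}\subset\widehat{D}$ by Definition \ref{def2}, so any $E_{i}\in\Gamma(D_{2})$ automatically lies in $\widehat{D}$, putting the mixed total geodesicity hypothesis in force.

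For the first conclusion, with $X\in\Gamma(D)$ and $E_{i}\in\Gamma(D_{2})\subset\Gamma(\widehat{D})$, mixed total geodesicity forces $h(X,E_{i})=0$. Expanding via (\ref{h1}) and using the linear independence of $\{N_{j}\}\cup\{W_{\alpha}\}$, every component vanishes; in particular $h_{i}^{l}(X,E_{i})=0$ (and also $h_{\alpha}^{s}(X,E_{i})=0$, which I will need for the second conclusion).

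For the second conclusion, I would differentiate the identity $\overline{g}(W_{\alpha},E_{i})=0$ with respect to $X$ using the metric compatibility of $\overline{\nabla}$. Substituting (\ref{eq32}) into $\overline{g}(\overline{\nabla}_{X}W_{\alpha},E_{i})$ collapses two of the three summands immediately: $\overline{g}(W_{\beta},E_{i})=0$ since $W_{\beta}\in TM^{\perp}$ and $E_{i}\in TM$, while $\overline{g}(A_{W_{\alpha}}X,E_{i})=0$ because $A_{W_{\alpha}}X\in\Gamma(TM)$ and $E_{i}\in\Gamma(\mathrm{Rad}\,TM)\subset\Gamma(TM^{\perp})$; the surviving contribution reduces to $\varphi_{\alpha i}(X)$ via $\overline{g}(N_{j},E_{i})=\delta_{ji}$. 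Dually, applying (\ref{eq11}) with $Y=E_{i}$ to $\overline{g}(W_{\alpha},\overline{\nabla}_{X}E_{i})$ leaves only the transversal part pairing with $W_{\alpha}$, yielding $\epsilon_{\alpha}h_{\alpha}^{s}(X,E_{i})$, which is zero by the first part. Combining these gives $\varphi_{\alpha i}(X)=0$.

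The main (and essentially only) subtle step is recognizing that $A_{W_{\alpha}}X$, as the tangential part of $-\overline{\nabla}_{X}W_{\alpha}$ in (\ref{eq32}), lies in $TM$ and is therefore $\overline{g}$-orthogonal to the radical direction $E_{i}$. Beyond this, the proof is routine metric-compatibility bookkeeping; in particular, the nearly cosymplectic structure and the ascreen condition play no substantive role here beyond fixing the ambient setup and guaranteeing that $D_{2}$ is nontrivial.
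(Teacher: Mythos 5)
Your proof is correct. The paper states this corollary without proof, as an immediate consequence of the preceding theorem (mixed total geodesicity $\Leftrightarrow$ $h^{s}_{\alpha}(X,Y)=0$ on $\Gamma(D)\times\Gamma(\widehat{D})$ and $A^{*}_{E_{i}}X=0$); your route bypasses that theorem and works straight from the definition, but it lands on the same mechanism: the inclusion $D_{2}\subset\widehat{D}$ puts $h(X,E_{i})=0$ in force, and differentiating $\overline{g}(W_{\alpha},E_{i})=0$ yields exactly the standard identity $\varphi_{\alpha i}(X)=-\epsilon_{\alpha}h^{s}_{\alpha}(X,E_{i})$ that the intended argument would quote, with all the orthogonality facts you invoke ($A_{W_{\alpha}}X\in\Gamma(TM)$ versus $E_{i}\in\Gamma(TM^{\perp})$, $\overline{g}(N_{j},E_{i})=\delta_{ji}$, $l\mathrm{tr}(TM)\perp S(TM^{\perp})$) justified by (\ref{eq04}) and the frame construction in Section \ref{Preli}. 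Two minor remarks: your argument actually gives the stronger conclusion $h^{l}_{j}(X,E_{i})=0$ for all $j$, and the specific component $h^{l}_{i}(X,E_{i})$ (equal indices) vanishes for \emph{any} null submanifold without the mixed geodesic hypothesis, since differentiating $\overline{g}(E_{i},E_{j})=0$ gives $h^{l}_{i}(X,E_{j})+h^{l}_{j}(X,E_{i})=0$; so the real content of the corollary is the statement about $\varphi_{\alpha i}$, which you prove cleanly. You are also right that the nearly cosymplectic and ascreen structures play no role beyond guaranteeing the setting.
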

\begin{definition}{\rm
 A QGCR-null submanifold of an indefinite nearly cosymplectic manifold $(\overline{M}, \overline{g})$ is called $D$-totally geodesic QGCR-null submanifold if its second fundamental form $h$ satisfies
 $$
 h(X,Y)=0,\;\;\forall X,\,Y\in\Gamma(D).
 $$
 }
\end{definition} 
 Since $M$ is ascreen QGCR-null submanifold, we have $\overline{g}(X,\xi)=0$, for all $X\in\Gamma(D)$. Applying $\overline{\nabla}_{Y}$ to $\overline{g}(X,\xi)=0$ we get
 \begin{equation}\label{ms30}
  \eta(\overline{\nabla}_{Y}X)=-\overline{g}(X,\overline{\nabla}_{Y}\xi)=\overline{g}(X,\overline{H}Y).
 \end{equation}
Interchanging $X$ and $Y$ in (\ref{ms30}), and then adding the resulting equation to (\ref{ms30}), gives
 \begin{equation}\label{lems11}
  \eta(\overline{\nabla}_{X}Y)+\eta(\overline{\nabla}_{Y}X)=\overline{g}(Y,\overline{H}X)+\overline{g}(X,\overline{H}Y)=0.
 \end{equation}  
\begin{theorem}
 Let $(M,g,S(TM),S(TM^\perp))$ be a proper ascreen QGCR-null submanifold of an indefinite nearly cosymplectic manifold $(\overline{M}, \overline{g})$. Then, $M$ is $D$-totally geodesic if and only if $\overline{\phi}h^{l}(X,\overline{\phi}E)$ and $\overline{\phi}h^{s}(X,\overline{\phi}W)$ respectively have no components along $l\mathrm{tr}(TM)$ and $S(TM^\perp)$, while both $\nabla_{X} \overline{\phi} E$ and $\nabla_{X} \overline{\phi} W\notin\Gamma(D_{0})$ for all  $X\in\Gamma(D)$, $E\in\Gamma(\mathrm{Rad} \, TM)$ and $W\in\Gamma(\mathcal{S})$.
\end{theorem}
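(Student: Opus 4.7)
The plan is to exploit the decomposition $h(X,Y) = h^{l}(X,Y) + h^{s}(X,Y)$ and test each summand against the bundles $l\mathrm{tr}(TM)$ and $S(TM^{\perp})$, using the nearly cosymplectic identity (\ref{eqz}) together with Lemma \ref{lems2}. Since $\xi\in\Gamma(D_{2}\oplus\mathcal{L})$ by Theorem \ref{asc}, every $Y\in\Gamma(D)$ satisfies $\eta(Y)=0$, hence $D$ is $\overline{\phi}$-invariant with $\overline{\phi}^{2}Y=-Y$, and every element of the $D_{1}$-part of $D$ has the form $\overline{\phi}E$ for some $E\in\Gamma(D_{1})\subset\Gamma(\mathrm{Rad}\,TM)$. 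These observations reduce the verification of $h(X,Y)=0$ to checking finitely many inner products involving $\overline{\phi}E$ and $\overline{\phi}W$.

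For the forward direction, I assume $h(X,Y)=0$ for all $X,Y\in\Gamma(D)$ and fix $X\in\Gamma(D)$ and $E\in\Gamma(\mathrm{Rad}\,TM)$. Starting from
\begin{equation*}
\overline{\nabla}_{X}\overline{\phi}E=(\overline{\nabla}_{X}\overline{\phi})E+\overline{\phi}\,\overline{\nabla}_{X}E,
\end{equation*}
I expand both sides via the Gauss formula (\ref{eq11}) and substitute $(\overline{\nabla}_{X}\overline{\phi})E=-(\overline{\nabla}_{E}\overline{\phi})X$ from (\ref{eqz}). Projecting onto $l\mathrm{tr}(TM)$, the vanishing of $h$ on $D\times D$ kills $h^{l}(X,\overline{\phi}E)$ whenever $\overline{\phi}E\in\Gamma(D)$ (i.e.\ whenever $E\in\Gamma(D_{1})$) and forces the $l\mathrm{tr}(TM)$-component of $\overline{\phi}\,h^{l}(X,\overline{\phi}E)$ to vanish in general. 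The same manipulation with $E$ replaced by $W\in\Gamma(\mathcal{S})$, using the Weingarten equation (\ref{eq32}) in place of Gauss, gives the analogous conclusion for $\overline{\phi}\,h^{s}(X,\overline{\phi}W)$. Finally, pairing $\overline{\nabla}_{X}\overline{\phi}E$ and $\overline{\nabla}_{X}\overline{\phi}W$ with an arbitrary $Z\in\Gamma(D_{0})$, and using $\overline{g}(\overline{\phi}E,Z)=\overline{g}(\overline{\phi}W,Z)=0$ together with $h(X,Z)=0$, yields $g(\nabla_{X}\overline{\phi}E,Z)=g(\nabla_{X}\overline{\phi}W,Z)=0$; since $D_{0}$ is non-degenerate, this excludes $\nabla_{X}\overline{\phi}E$ and $\nabla_{X}\overline{\phi}W$ from $\Gamma(D_{0})$.

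For the converse, I decompose $Y=Y_{0}+\overline{\phi}E$ with $Y_{0}\in\Gamma(D_{0})$ and $E\in\Gamma(D_{1})$ (using that $\overline{\phi}|_{D_{1}}$ is bijective). To show $h(X,\overline{\phi}E)=0$, I compute $\overline{g}(\overline{\nabla}_{X}\overline{\phi}E,E_{j})$ and $\overline{g}(\overline{\nabla}_{X}\overline{\phi}E,W_{\beta})$, rewrite them via the identity $(\overline{\nabla}_{X}\overline{\phi})E+(\overline{\nabla}_{E}\overline{\phi})X=0$ and Gauss-Weingarten, and collapse the resulting expressions using the hypotheses on the transversal parts of $\overline{\phi}\,h^{l}$ and $\overline{\phi}\,h^{s}$. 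For $h(X,Y_{0})$, I write $Y_{0}=-\overline{\phi}^{2}Y_{0}$ and run the same machinery; any non-vanishing component of $h(X,Y_{0})$ would, by a contradiction argument of the same shape as the one used for $A^{*}_{E_{i}}X$ in the previous theorem, push $\nabla_{X}\overline{\phi}E$ or $\nabla_{X}\overline{\phi}W$ into $\Gamma(D_{0})$, contradicting the remaining hypotheses.

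The main obstacle I foresee is the careful tracking of how $\overline{\phi}$ acts on the subbundles of $\mathrm{tr}(TM)$: $\overline{\phi}$ carries $\mathcal{L}$ into $\overline{D}\subset S(TM)$, but the complementary part of $l\mathrm{tr}(TM)$ dual to $D_{1}$ is sent back into $l\mathrm{tr}(TM)$, and similarly for the splitting of $S(TM^{\perp})$ induced by $\mathcal{S}$. Pinning down which projections of $\overline{\phi}\,h^{l}(X,\overline{\phi}E)$ and $\overline{\phi}\,h^{s}(X,\overline{\phi}W)$ really control the $D$-totally geodesic property, and ensuring the Lemma~\ref{lems2} identities interact correctly with the ascreen condition $\xi\in\Gamma(D_{2}\oplus\mathcal{L})$, is where I expect most of the technical work to sit.
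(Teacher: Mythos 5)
Your plan assembles the right ingredients (the nearly cosymplectic identity, Gauss--Weingarten, the ascreen condition, non-degeneracy of $D_{0}$), but it omits the one step the paper's argument actually turns on, and the substitute you propose is circular. The paper first writes $\overline{g}(h(X,Y),E)=\overline{g}(\overline{\phi}\,\overline{\nabla}_{X}Y,\overline{\phi}E)+\overline{g}(Y,\overline{H}X)\,\overline{g}(E,\xi)$ using (\ref{equa2}) and (\ref{v10}), and then \emph{symmetrizes in $X$ and $Y$}: adding the same identity with $X$ and $Y$ interchanged cancels the $\overline{H}$-terms via the ascreen relation $\overline{g}(X,\overline{H}Y)+\overline{g}(Y,\overline{H}X)=0$ and, crucially, lets the nearly cosymplectic condition act on the pair $(X,Y)\in\Gamma(D)\times\Gamma(D)$, since $\overline{g}(\overline{\phi}\,\overline{\nabla}_{X}Y+\overline{\phi}\,\overline{\nabla}_{Y}X,\overline{\phi}E)=\overline{g}(\overline{\nabla}_{X}\overline{\phi}Y+\overline{\nabla}_{Y}\overline{\phi}X,\overline{\phi}E)$ exactly because $(\overline{\nabla}_{X}\overline{\phi})Y+(\overline{\nabla}_{Y}\overline{\phi})X=0$. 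This yields the closed identities
\begin{align*}
2\overline{g}(h(X,Y),E)&=\overline{g}(Y,\overline{\phi}h(X,\overline{\phi}E))+\overline{g}(X,\overline{\phi}h(Y,\overline{\phi}E)),\qquad X,Y\in\Gamma(D_{1}),\\
2\overline{g}(h(X,Y),E)&=-\overline{g}(\overline{\phi}Y,\nabla_{X}\overline{\phi}E)-\overline{g}(\overline{\phi}X,\nabla_{Y}\overline{\phi}E),\qquad X,Y\in\Gamma(D_{0}),
\end{align*}
whose right-hand sides are precisely the quantities named in the hypotheses: radical $X,Y$ pair nontrivially only with $l\mathrm{tr}(TM)$, and $\overline{\phi}X,\overline{\phi}Y\in\Gamma(D_{0})$ see only the $D_{0}$-component. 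You never symmetrize; instead you apply the nearly cosymplectic condition to the pair $(X,E)$ and expand $\overline{\nabla}_{X}\overline{\phi}E=-(\overline{\nabla}_{E}\overline{\phi})X+\overline{\phi}\,\overline{\nabla}_{X}E$. But $-(\overline{\nabla}_{E}\overline{\phi})X=-\overline{\nabla}_{E}\overline{\phi}X+\overline{\phi}\,\overline{\nabla}_{E}X$, so the transversal part of what you obtain contains $\overline{\phi}h(X,E)$ and $h(E,\overline{\phi}X)$ --- the very quantities you are trying to prove vanish in your ``converse'' direction. Without a symmetrized identity that isolates $2\overline{g}(h(X,Y),E)$ on one side, the loop does not close, and the terms coming from $\overline{\nabla}\overline{\phi}\neq 0$ and $\overline{H}\neq 0$ (both generically nonzero in a nearly cosymplectic, non-cosymplectic, ambient manifold) are left unaccounted for.

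A secondary problem sits in your ``forward'' direction: you assert that $h|_{D\times D}=0$ kills $h^{l}(X,\overline{\phi}E)$ whenever $\overline{\phi}E\in\Gamma(D)$ and ``forces the $l\mathrm{tr}(TM)$-component of $\overline{\phi}h^{l}(X,\overline{\phi}E)$ to vanish in general.'' The theorem quantifies over all $E\in\Gamma(\mathrm{Rad}\,TM)=\Gamma(D_{1}\oplus D_{2})$, and for $E\in\Gamma(D_{2})$ one has $\overline{\phi}E\in\Gamma(\overline{\phi}D_{2})\subset\Gamma(\widehat{D})$, so $h(X,\overline{\phi}E)$ is a mixed term on which the $D$-geodesy hypothesis gives no information; your deduction does not go through there. (To be fair, the paper itself dispatches this implication with ``the converse is obvious,'' so it offers no model argument either, but the specific route you sketch for it fails at exactly this point.)
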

\begin{proof}
 By the definition of ascreen QGCR-null submanifold, $M$ is $D$ geodesic if and only if $\overline{g}(h(X,Y),E)=\overline{g}(h(X,Y),W)=0$, for all $X,Y\in\Gamma(D)$, $W_{\alpha}\in\Gamma(S(TM^{\perp}))$ and $E\in\Gamma(\mathrm{Rad} \, TM)$.

 Using (\ref{eq11}) and (\ref{equa2}), we derive
 \begin{align*}
  \overline{g}(h(X,Y),E)   &=\overline{g}(\overline{\nabla}_{X}Y,E)=\overline{g}(\overline{\phi}\,\overline{\nabla}_{X}Y,\overline{\phi}E)-\overline{g}(Y,\overline{\nabla}_{X}\xi)\overline{g}(E,\xi), 
 \end{align*}
from which when we apply (\ref{v10}) we get
\begin{equation}\label{ms31}
 \overline{g}(h(X,Y),E)=\overline{g}(\overline{\phi}\,\overline{\nabla}_{X}Y,\overline{\phi}E)+\overline{g}(Y,\overline{H}X)\overline{g}(E,\xi).
\end{equation}
Interchanging $X$ and $Y$ in (\ref{ms31}) and considering the fact that $h$ is symmetric we get 
\begin{equation}\label{ms32}
 \overline{g}(h(X,Y),E)=\overline{g}(\overline{\phi}\,\overline{\nabla}_{Y}X,\overline{\phi}E)+\overline{g}(X,\overline{H}Y)\overline{g}(E,\xi).
\end{equation}
Summing (\ref{ms31}) and  (\ref{ms32}), and then applying (\ref{lems11}), we have
\begin{equation}\label{ms33}
 2\overline{g}(h(X,Y),E)=\overline{g}(\overline{\phi}\,\overline{\nabla}_{X}Y,\overline{\phi}E)+\overline{g}(\overline{\phi}\,\overline{\nabla}_{Y}X,\overline{\phi}E).
\end{equation}
Now, applying the nearly cosymplectic condition in (\ref{eqz}) to (\ref{ms33}), leads to 
\begin{equation}\label{ms34}
 2\overline{g}(h(X,Y),E)=\overline{g}(\overline{\nabla}_{X}\overline{\phi}Y,\overline{\phi}E)+\overline{g}(\overline{\nabla}_{Y}\overline{\phi}X,\overline{\phi}E).
\end{equation}
From (\ref{ms34}) and (\ref{eq11}) we derive
\begin{align}\label{ms35}
 2\overline{g}(h(X,Y),E)&=\overline{g}(\overline{\nabla}_{X}\overline{\phi}Y,\overline{\phi}E)+\overline{g}(\overline{\nabla}_{Y}\overline{\phi}X,\overline{\phi}E)\nonumber\\ 
  &-\overline{g}(\overline{\phi}Y,h(X,\overline{\phi}E))-\overline{g}(\overline{\phi}X,h(Y,\overline{\phi}E))
\end{align}
If we let $X,Y\in\Gamma(D_{1})$ in (\ref{ms35}), we obtain
\begin{equation}\label{ms36}
 2\overline{g}(h(X,Y),E)=\overline{g}(Y,\overline{\phi}h(X,\overline{\phi}E))+\overline{g}(X,\overline{\phi}h(Y,\overline{\phi}E)).
\end{equation}
On the other hand, for $X,Y\in\Gamma(D_{0})$, we get 
\begin{equation}\label{ms37}
 2\overline{g}(h(X,Y),E)=-\overline{g}(\overline{\phi}Y,\nabla_{X}\overline{\phi}E)-\overline{g}(\overline{\phi}X,\nabla_{Y}\overline{\phi}E).
\end{equation}
It is easy to see from (\ref{ms36}) and (\ref{ms36}) that if $\overline{\phi}h(X,\overline{\phi}E)\notin\Gamma(l\mathrm{tr}(TM))$ and $\nabla_{X}\overline{\phi}E\notin\Gamma(D_{0})$, then $\overline{g}(h(X,Y),E)=0$. The other assertions follows in the same way. The converse is obvious.
\end{proof}
\begin{corollary}
 Let $(M,g,S(TM),S(TM^\perp))$ be a proper ascreen QGCR-null submanifold of an indefinite nearly cosymplectic manifold $(\overline{M}, \overline{g})$. If $M$ is $D$-totally geodesic then $\nabla_{X}^{*} \overline{\phi} E$, $\nabla_{X}^{*} \overline{\phi} W\notin\Gamma(D_{0})$, for all  $X\in\Gamma(D)$, $E\in\Gamma(D_{2})$ and $W\in\Gamma(\mathcal{S})$.
\end{corollary}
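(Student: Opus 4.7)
The plan is to derive the corollary directly from the preceding theorem by transferring its conclusion from the induced connection $\nabla$ on $TM$ to the screen induced connection $\nabla^{*}$ on $S(TM)$, using the Gauss-type relation $\nabla_{X}PY = \nabla_{X}^{*}PY + \sum_{i=1}^{r}h_{i}^{*}(X,PY)E_{i}$ already recorded in the Preliminaries. The whole argument is then essentially a bookkeeping of radical versus screen components.

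First I would verify that the formula above applies to $\overline{\phi}E$ and $\overline{\phi}W$. By Definition \ref{def2} and the decomposition $S(TM)=\{\overline{\phi}D_{2}\oplus\overline{D}\}\perp D_{0}$ with $\overline{D}=\overline{\phi}\mathcal{S}\oplus\overline{\phi}\mathcal{L}$, one has $\overline{\phi}E\in\Gamma(\overline{\phi}D_{2})\subset\Gamma(S(TM))$ for $E\in\Gamma(D_{2})$ and $\overline{\phi}W\in\Gamma(\overline{\phi}\mathcal{S})\subset\Gamma(S(TM))$ for $W\in\Gamma(\mathcal{S})$. Hence $P\overline{\phi}E=\overline{\phi}E$ and $P\overline{\phi}W=\overline{\phi}W$, so
\[
\nabla_{X}\overline{\phi}E = \nabla_{X}^{*}\overline{\phi}E + \sum_{i}h_{i}^{*}(X,\overline{\phi}E)E_{i}, \qquad \nabla_{X}\overline{\phi}W = \nabla_{X}^{*}\overline{\phi}W + \sum_{i}h_{i}^{*}(X,\overline{\phi}W)E_{i}.
\]
The correction terms $\sum_{i}h_{i}^{*}(X,\cdot)E_{i}$ both lie in $\Gamma(\mathrm{Rad}\,TM)$, while $D_{0}\subset S(TM)$. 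Since $\mathrm{Rad}\,TM$ and $S(TM)$ are complementary in $TM$, these corrections make no contribution whatsoever to the $D_{0}$-component, so the $D_{0}$-components of $\nabla_{X}\overline{\phi}E$ and $\nabla_{X}^{*}\overline{\phi}E$ agree, and likewise for $\overline{\phi}W$.

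To finish, I would invoke the preceding theorem under the hypothesis that $M$ is $D$-totally geodesic to obtain $\nabla_{X}\overline{\phi}E,\,\nabla_{X}\overline{\phi}W\notin\Gamma(D_{0})$ for $X\in\Gamma(D)$, $E\in\Gamma(D_{2})\subset\Gamma(\mathrm{Rad}\,TM)$, and $W\in\Gamma(\mathcal{S})$. Combined with the previous step this forces $\nabla_{X}^{*}\overline{\phi}E,\,\nabla_{X}^{*}\overline{\phi}W\notin\Gamma(D_{0})$, which is the claim. The only delicate point — which I expect to be a conceptual rather than a technical obstacle — is the interpretation of the symbol ``$\notin\Gamma(D_{0})$''. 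It must be read consistently with its usage in the preceding theorem, namely as ``has no $D_{0}$-component''; under that reading the radical-versus-screen bookkeeping above closes the argument cleanly, whereas under a strictly pointwise reading one would additionally have to rule out the possibility that $\nabla_{X}^{*}\overline{\phi}E\in\Gamma(D_{0})$ while a non-zero radical correction $\sum_{i}h_{i}^{*}(X,\overline{\phi}E)E_{i}$ kicks $\nabla_{X}\overline{\phi}E$ out of $D_{0}$, and this does not follow from the $D$-totally geodesic hypothesis alone.
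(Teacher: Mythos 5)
Your proof is correct and matches the paper's intended derivation: the corollary is stated there without proof, and the natural route is exactly yours --- apply the screen Gauss formula $\nabla_{X}PY=\nabla_{X}^{*}PY+\sum_{i}h_{i}^{*}(X,PY)E_{i}$ to $\overline{\phi}E\in\Gamma(\overline{\phi}D_{2})\subset\Gamma(S(TM))$ and $\overline{\phi}W\in\Gamma(\overline{\phi}\mathcal{S})\subset\Gamma(S(TM))$, and observe that the radical correction term, lying in $\mathrm{Rad}\,TM$ which is complementary (indeed orthogonal) to $D_{0}$, cannot affect the $D_{0}$-component. Your caveat about reading ``$\notin\Gamma(D_{0})$'' as ``has vanishing $D_{0}$-component'' is well taken, but that is precisely the reading the paper itself relies on in the proof of the preceding theorem (where this condition is used to kill the terms $\overline{g}(\overline{\phi}Y,\nabla_{X}\overline{\phi}E)$ for $Y\in\Gamma(D_{0})$), so under the paper's own conventions your argument closes.
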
 
 \begin{corollary}
 Let $(M,g,S(TM),S(TM^\perp))$ be a proper ascreen QGCR-null submanifold of an indefinite nearly cosymplectic manifold $(\overline{M}, \overline{g})$. If $M$ is $D$-totally geodesic, then $D$ defines a totally geodesic folliation in $M$.
\end{corollary}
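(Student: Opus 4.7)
The plan is to show that for any $X, Y \in \Gamma(D)$, the covariant derivative $\nabla_X Y$ lies in $\Gamma(D)$; from this, the torsion-freeness of $\nabla$ automatically yields $[X,Y] \in \Gamma(D)$ (Frobenius integrability), and the defining condition $\nabla_X Y \in \Gamma(D)$ itself is precisely the statement that the leaves of $D$ are totally geodesic in $M$. Since $M$ is $D$-totally geodesic, $h(X,Y)=0$ for $X,Y \in \Gamma(D)$, so the Gauss formula \eqref{eq11} collapses to $\overline{\nabla}_X Y = \nabla_X Y \in \Gamma(TM)$, and what remains is to verify that $\nabla_X Y$ has no component in any summand of $\widehat{D} = \{D_2 \perp \overline{\phi} D_2\} \oplus \overline{\phi}\mathcal{L} \oplus \overline{\phi}\mathcal{S}$.

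The key structural input I would use is the $\overline{\phi}$-invariance of $D$: since $\overline{\phi} D_1 = D_1$ and $\overline{\phi} D_0 \subseteq D_0$, also $\overline{\phi} Y \in \Gamma(D)$, so $h(X, \overline{\phi} Y) = 0$ and $\overline{\nabla}_X \overline{\phi} Y = \nabla_X \overline{\phi} Y$ as well. Combining this with $\overline{\nabla}_X \overline{\phi} Y = (\overline{\nabla}_X \overline{\phi})Y + \overline{\phi}\,\nabla_X Y$ and the nearly cosymplectic identity \eqref{eqz} turns information about $\nabla_X Y$ into information about $\nabla_X \overline{\phi} Y$ and conversely. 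I would then check each projection onto $\widehat{D}$ separately using the appropriate $\overline{g}$-duality. For the $D_2$-component, pair with $N \in \Gamma(\mathcal{L})$: because $\overline{g}(Y,N) = 0$ for every $Y \in \Gamma(D)$ (as $D_0 \subset S(TM) \perp l\mathrm{tr}(TM)$ and $D_1$ is paired only with the null sections outside $\mathcal{L}$), metric compatibility gives $\overline{g}(\nabla_X Y, N) = -\overline{g}(Y, \overline{\nabla}_X N)$, which via \eqref{eq31} reduces to a Weingarten term that vanishes using $h^l(X,Y) = 0$. For the $\overline{\phi}\mathcal{S}$-component, pair with $\overline{\phi} W$ for $W \in \Gamma(\mathcal{S})$ and invoke the consequence of the preceding theorem that $\nabla_X \overline{\phi} W \notin \Gamma(D_0)$; together with the orthogonality of $D_0$ to $D_1 \oplus \widehat{D}$ this forces the $\overline{\phi}\mathcal{S}$-projection to vanish. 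The $\overline{\phi} D_2$ and $\overline{\phi}\mathcal{L}$ projections follow analogously, pivoting on the condition $\nabla_X \overline{\phi} E \notin \Gamma(D_0)$ from the preceding theorem and the normalization $2\eta(E)\eta(N) = 1$ of Lemma \ref{lems7}, which is what governs the pairing $\overline{g}(\overline{\phi} E, \overline{\phi} N) = 1 - \eta(E)\eta(N) = \tfrac{1}{2}$.

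The main obstacle will be the bookkeeping among the six summands $D_0, D_1, D_2, \overline{\phi} D_2, \overline{\phi}\mathcal{L}, \overline{\phi}\mathcal{S}$ of $TM$: one must carefully track which pairings are non-degenerate, which vanish identically, and where the ascreen hypothesis $\xi \in D_2 \oplus \mathcal{L}$ intervenes. Once the orthogonality of $D_0$ to $D_1 \oplus \widehat{D}$ is exploited systematically, each projection is reduced to an expression involving $A_N X$, $A_W X$, $\nabla_X \overline{\phi} E$, or $\overline{\phi} h(X, \cdot)$, all of which are controlled by the characterization in the preceding theorem together with $h(X,Y) = h(X,\overline{\phi} Y) = 0$; the vanishing of every $\widehat{D}$-projection then yields $\nabla_X Y \in \Gamma(D)$, as desired.
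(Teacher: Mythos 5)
Your overall strategy --- reduce ``$D$ defines a totally geodesic foliation in $M$'' to showing $\nabla_XY\in\Gamma(D)$ for all $X,Y\in\Gamma(D)$, and then kill the $\widehat{D}$-components one at a time --- is the right one; note that the paper itself states this corollary without proof, so the comparison can only be against what a correct argument would require. The concrete problem is your treatment of the $D_2$-component. Pairing with $N\in\Gamma(\mathcal{L})$ and using \eqref{eq31} gives $\overline{g}(\nabla_XY,N)=\overline{g}(Y,A_{N}X)-\sum_j\tau_{ij}(X)\overline{g}(Y,N_j)$, and the Weingarten term $\overline{g}(Y,A_{N}X)$ is \emph{not} killed by $h^l(X,Y)=0$: the shape operator $A_{N_i}$ of the null transversal is tied to the screen fundamental form $h^*_i$ (one has $g(A_{N_i}X,PY)=h_i^*(X,PY)$), whereas $h^l_i$ is tied to $A^*_{E_i}$. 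Since $D$-total geodesicity imposes nothing on $h^*$, this component does not vanish for the reason you give. To make the $D_2$- and $\overline{\phi}D_2$-projections tractable one must route the computation through the almost contact structure, writing $\overline{g}(\overline{\nabla}_XY,N)=\overline{g}(\overline{\phi}\,\overline{\nabla}_XY,\overline{\phi}N)+\eta(\overline{\nabla}_XY)\eta(N)$ and invoking \eqref{eqz}, exactly as in the proof of the preceding theorem.

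Even along that corrected route there is a residue your outline does not address: for $Y\in\Gamma(D_1)$ the pairings $\overline{g}(\nabla_XY,\overline{\phi}E)$ and $\overline{g}(\nabla_XY,N)$ produce terms of the form $h_i^l(X,E)\,\overline{g}(\overline{\phi}Y,N_i)$ with $E\in\Gamma(D_2)\subset\Gamma(\widehat{D})$ and $N_i$ dual to $D_1$; these are \emph{mixed} second fundamental form terms, controlled neither by $h|_{D\times D}=0$ nor, on its face, by the conditions of the preceding theorem, and they must be shown to vanish or cancel. A smaller structural point: in the ascreen case $\overline{\phi}\mathcal{L}=\overline{\phi}D_2$ by Theorem \ref{asc}, so these are not independent summands to be disposed of ``analogously.'' Your handling of the $\overline{\phi}\mathcal{S}$-projection against $Y\in\Gamma(D_0)$ (via $\nabla_X\overline{\phi}W\notin\Gamma(D_0)$ and $h^s(X,\overline{\phi}Y)=0$) is essentially sound, but as written the proposal does not close the argument.
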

 
\section*{Acknowledgments}
S. Ssekajja extends his sincere gratitude to the African Institute of Mathematical Sciences (AIMS) and the Simon Foundation through the RGSM-Network project, for their financial support during this research.

\end{document}